\begin{document}
\title{Sufficient Conditions for the Joined Set of Solutions of the Overdetermined Interval System of Linear Algebraic Equations Membership to Only One Orthant}
\titlerunning{On the membership of the admissible set ISLAE to a single orthant}
%
\author{Vladimir Erokhin\inst{1}\orcidID{0000-0002-1760-7859} \and
Vitaly Kakaev\inst{1}\orcidID{0000-0001-9898-2740} \and
Andrey Kadochnikov\inst{1}\orcidID{0000-0002-2142-3681} \and
Sergey Sotnikov\inst{1}\orcidID{0000-0002-9712-4717}}
\authorrunning{V. Erokhin et al.}
\institute{
Mozhaisky Military Space Academy, 13, Zhdanovskaya Street, \\
197198 St. Petersburg, Russia \\
\email{vka@mil.ru}
}
\maketitle              
\begin{abstract}
Interval systems of linear algebraic equations (ISLAE) are considered in the context
of constructing  of linear models according to data with interval uncertainty.
Sufficient conditions for boundedness and convexity of an admissible domain (AD) of ISLAE and its belonging to only one orthant of an $n$-dimensional space are proposed, which can be verified in polynomial time by the methods of computational linear algebra.
In this case, AD ISLAE turns out to be a convex bounded polyhedron, entirely lying in the corresponding ortant.
These properties of AD ISLAE allow, firstly, to find solutions to the corresponding ISLAE in polynomial time by linear programming methods (while finding a solution to ISLAE of a general form is an NP-hard problem).
Secondly, the coefficients of the linear model obtained by solving the corresponding ISLAE have an analogue of the significance property of the coefficient of the linear model, since the coefficients of the linear model do not change their sign within the limits of the AD.
The formulation and proof of the corresponding theorem are presented.
The error estimation and convergence of an arbitrary solution  of ISLAE to the normal solution of a hypothetical exact system of linear algebraic equations are also investigated.
An illustrative numerical example is given.

\keywords
\keywords{Interval systems of linear algebraic equations  
\and 
Polynomial-time solvability  
\and 
Convergence 
\and
Error estimation 
\and
Analog of statistical significance property.
}
\end{abstract}
%

%
\section{Introduction}
Overdetermined interval systems of linear algebraic equations are a native tool for creating data processing algorithms with interval uncertainty and estimating the parameters of the corresponding linear models \cite{Voshchinin,Belov,Polyak,Zhilin,Madiyarov,Shary20}.
These systems can be defined as follows
\begin{gather}
\label{eq1}
Ax=b,\;
\underset{\raise0.3em\hbox{$\smash{\scriptstyle-}$}}{A}\leqslant A\leqslant \bar A,\;
\underset{\raise0.3em\hbox{$\smash{\scriptscriptstyle-}$}}{b} \leqslant b \leqslant \bar b,\;
\end{gather}
where $\underset{\raise0.3em\hbox{$\smash{\scriptstyle-}$}}{A} ,\bar A \in {\mathbb{R}^{m \times n}}$ are given matrices;
$\underset{\raise0.3em\hbox{$\smash{\scriptstyle-}$}}{b} ,\bar b \in {\mathbb{R}^m}$ are given vectors,
such that
$\underset{\raise0.3em\hbox{$\smash{\scriptscriptstyle-}$}}{A} \leqslant \bar A$,
$\underset{\raise0.3em\hbox{$\smash{\scriptscriptstyle-}$}}{b} \leqslant \bar b$;
$A =(a_{ij})\in \mathbb{R}^{m \times n}$,
${x =(x_j)\in {\mathbb{R}^n}}$,
${b =(b_j)\in {\mathbb{R}^m}}$ are unknown (to be determined) matrix and vectors,
$\underset{\raise0.3em\hbox{$\smash{\scriptscriptstyle-}$}}{A} \ne \bar A$,
$\underset{\raise0.3em\hbox{$\smash{\scriptscriptstyle-}$}}{b} \ne \bar b$,
$m>n$.

In the study of ISLAE, the focus is often only on the so-called \textit{joined set of solutions} ISLAE \cite{Shary20}, defined as
$$
{\mathbf{X}} = \left\{ x\left|
\left(
\exists
A,b
\left|
\underset{\raise0.3em\hbox{$\smash{\scriptscriptstyle-}$}}{A} \leqslant A \leqslant \bar A,\underset{\raise0.3em\hbox{$\smash{\scriptscriptstyle- }$}}{b} \leqslant b \leqslant \bar b,Ax = b \right.
\right)
\right.
\right\}.
$$

The equivalent \eqref{eq1} representation of ISLAE can be written using the \textit{middle matrix}
${A_c} = (a_{ij}^c)=\frac{1}{2}( {\underset{\raise0.3em\hbox{$\smash{\scriptscriptstyle-}$}}{A} + \bar A} )$,
\textit{radius matrix}
${A_r} = (a_{ij}^r)=\frac{1}{2}( {\bar A - \underset{\raise0.3em\hbox{$\smash{\scriptscriptstyle-}$}}{ A} } )$,
\textit{middle vector}
${b_c} = (b_i^c)=\frac{1}{2}( {\underset{\raise0.3em\hbox{$\smash{\scriptscriptstyle-}$}}{b} + \bar b} )$
and \textit{radius vector}
${b_r} = (b_i^c)=\frac{1}{2}( {\bar b - \underset{\raise0.3em\hbox{$\smash{\scriptstyle-}$}}{b} } )$:
\begin{equation}
\label{eq1a}
Ax=b,A_c-A_r\leqslant A \leqslant A_c+A_r, b_c-b_r\leqslant b \leqslant b_c+b_r.
\end{equation}

In terms of these vectors and matrices, an important result is usually formulated that characterizes the set $\mathbf{X}$ and any admissible solution $\{A,b,x\}$ of ISLAE.
Consider the nonlinear system of inequalities
\begin{equation}
\label{OP}
\left| {{A_c}x - {b_c}} \right| \leqslant {A_r}\left| x\right| + {b_r},
\end{equation}
where $\left| \cdot \right|$ is an element-by-element operation of taking an absolute value.
Denote by the symbol
${\mathbf{\overset{\lower0.5em\hbox{$\smash{\scriptscriptstyle\frown}$}}{X} }}$
set of system \eqref{OP} solutions.

Occurs

\begin{theorem}[Oettli-Prager Theorem \cite{Oettli-Prager-64}]
\label{TOP}
\begin{equation}
\label{eq3}
\mathbf{X} \equiv \bf{\overset{\lower0.5em\hbox{$\smash{\scriptscriptstyle\frown}$}}{X} }.
\end{equation}
Moreover, if $x$ is a solution to the system of inequalities \eqref{OP}, matrix $A$ and vector $b$,
satisfying the conditions \eqref{eq1} or, equivalently, \eqref{eq1a},
can be built according to the formulas
\begin{gather*}
a_{ij}=a_{ij}^c+\Delta a_{ij},
b_i=b_i^c+\Delta b_i, \\
\Delta a_{ij} = -d_i a_{ij}^r \operatorname{sign}(x_j)/\gamma_i,
\Delta b_i = -d_i b_i^r /\gamma_i,\\
d=(d_i)=A_c x-b_c,
\gamma_i=\sum\limits_{i=1}^n {a_{ij}^r\left| x_j\right|+b_i^r}.
\end{gather*}
\end{theorem}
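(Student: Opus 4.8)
The plan is to prove the set identity $\mathbf{X}\equiv\overset{\frown}{X}$ by establishing the two inclusions $\mathbf{X}\subseteq\overset{\frown}{X}$ and $\overset{\frown}{X}\subseteq\mathbf{X}$ separately, and then to verify the explicit reconstruction formulas; in fact the second inclusion and the formulas can be handled simultaneously, since the formulas \emph{are} the witness $\{A,b\}$ exhibiting membership of $x$ in $\mathbf{X}$.

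For the inclusion $\mathbf{X}\subseteq\overset{\frown}{X}$ I would take an arbitrary $x\in\mathbf{X}$, so that there exist $A,b$ with $A_c-A_r\leqslant A\leqslant A_c+A_r$, $b_c-b_r\leqslant b\leqslant b_c+b_r$ and $Ax=b$. Writing $\Delta A=A-A_c$, $\Delta b=b-b_c$, the interval constraints become $|\Delta A|\leqslant A_r$ and $|\Delta b|\leqslant b_r$ element-wise, while $Ax=b$ rearranges to $A_cx-b_c=\Delta b-(\Delta A)x$. Applying the triangle inequality componentwise together with the elementary bound $|(\Delta A)x|\leqslant|\Delta A|\,|x|$ gives $|A_cx-b_c|\leqslant|\Delta b|+|\Delta A|\,|x|\leqslant b_r+A_r|x|$, i.e.\ $x$ solves \eqref{OP} and $x\in\overset{\frown}{X}$.

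For the reverse inclusion I would take $x$ satisfying \eqref{OP}, set $d=(d_i)=A_cx-b_c$ and $\gamma_i=\sum_j a_{ij}^r|x_j|+b_i^r$, and observe that \eqref{OP} is precisely the scalar statement $|d_i|\leqslant\gamma_i$ for every $i$. Define $A=A_c+\Delta A$, $b=b_c+\Delta b$ by the formulas in the statement, interpreting $d_i/\gamma_i$ as $0$ when $\gamma_i=0$. Admissibility is immediate: $|\Delta a_{ij}|=(|d_i|/\gamma_i)\,a_{ij}^r|\operatorname{sign}(x_j)|\leqslant a_{ij}^r$ and $|\Delta b_i|=(|d_i|/\gamma_i)\,b_i^r\leqslant b_i^r$ because $|d_i|/\gamma_i\leqslant1$. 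To check $Ax=b$ row by row I would use the identity $x_j\operatorname{sign}(x_j)=|x_j|$, valid also at $x_j=0$, to compute $\sum_j a_{ij}x_j-b_i=d_i-(d_i/\gamma_i)\bigl(\sum_j a_{ij}^r|x_j|+b_i^r\bigr)=d_i-d_i=0$. Hence $x\in\mathbf{X}$ and the reconstruction formulas are valid.

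The argument is essentially bookkeeping: the componentwise triangle inequality for the forward direction and a one-line cancellation for the backward one. The only point that needs genuine care is the degenerate rows with $\gamma_i=0$, where the division in the formulas is undefined; there the constraint \eqref{OP} forces $d_i=0$, so the $i$-th equation reads $\sum_j a_{ij}^c x_j=(A_cx)_i=b_i^c=b_i$ already for the middle data, and any admissible choice (e.g.\ $\Delta a_{ij}=0$, $\Delta b_i=0$) completes that row.
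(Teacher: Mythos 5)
The paper does not prove this theorem at all --- it is quoted as the classical Oettli--Prager result with a bare citation to the 1964 original --- so there is no internal proof to compare against. Your argument is the standard proof and is essentially correct: the forward inclusion via the componentwise triangle inequality applied to $A_cx-b_c=\Delta b-(\Delta A)x$ with $|(\Delta A)x|\leqslant|\Delta A|\,|x|$, the reverse inclusion by exhibiting the scaled rank-one-per-row perturbation as an explicit witness, and a correct treatment of the degenerate rows $\gamma_i=0$ (where \eqref{OP} forces $d_i=0$ and nonnegativity forces $b_i^r=0$ and $a_{ij}^r|x_j|=0$, so the middle data already satisfy row $i$). One caveat you should make explicit: your cancellation $\sum_j a_{ij}x_j-b_i=d_i-(d_i/\gamma_i)\gamma_i=0$ requires $\Delta b_i=+d_i b_i^r/\gamma_i$, whereas the statement as printed has $\Delta b_i=-d_i b_i^r/\gamma_i$; with the printed sign the residual of row $i$ equals $2d_i b_i^r/\gamma_i$, which need not vanish. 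The minus sign in the printed $\Delta b_i$ (like the garbled summation index in the printed $\gamma_i$, which should read $\gamma_i=\sum_{j=1}^n a_{ij}^r|x_j|+b_i^r$) is evidently a typo in the paper; your computation silently uses the corrected formula, and it would be cleaner to say so rather than to claim you are verifying the formulas exactly as stated.
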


It follows from the  Theorem~\ref{TOP} that the admissible set ${\mathbf{\overset{\lower0.5em\hbox{$\smash{\scriptscriptstyle\frown}$}}{X} }}$
systems of inequalities \eqref{OP}, and hence the joint set of ISLAE solutions
$\mathbf{X}$,
are the union of admissible sets $2^n$ of systems of linear inequalities,
each of which lies in one of the $2^n$ orthants of the $n$-dimensional space.
However, this set
${\mathbf{\overset{\lower0.5em\hbox{$\smash{\scriptscriptstyle\frown}$}}{X} }}$
($\mathbf{X}$) need not be convex, connected, and bounded.

This fact is a convincing illustration of the NP-complexity of the problem of finding solutions to ISLAE in the general case \cite{Ron}.
\vskip -6mm
\begin{figure}
\begin{tabular}{cc}
\includegraphics[width=0.4\textwidth]{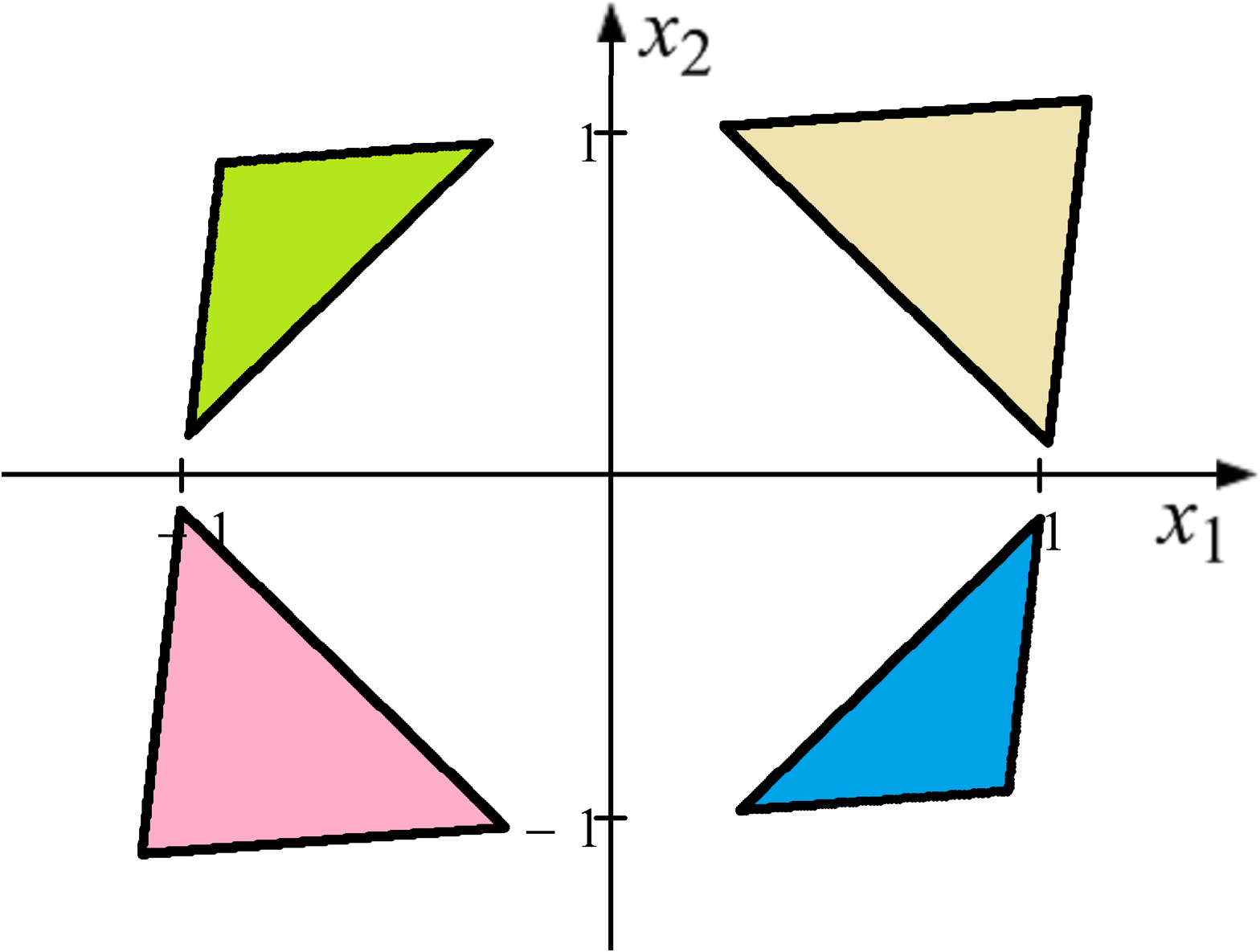}
& 
\includegraphics[width=0.5\textwidth]{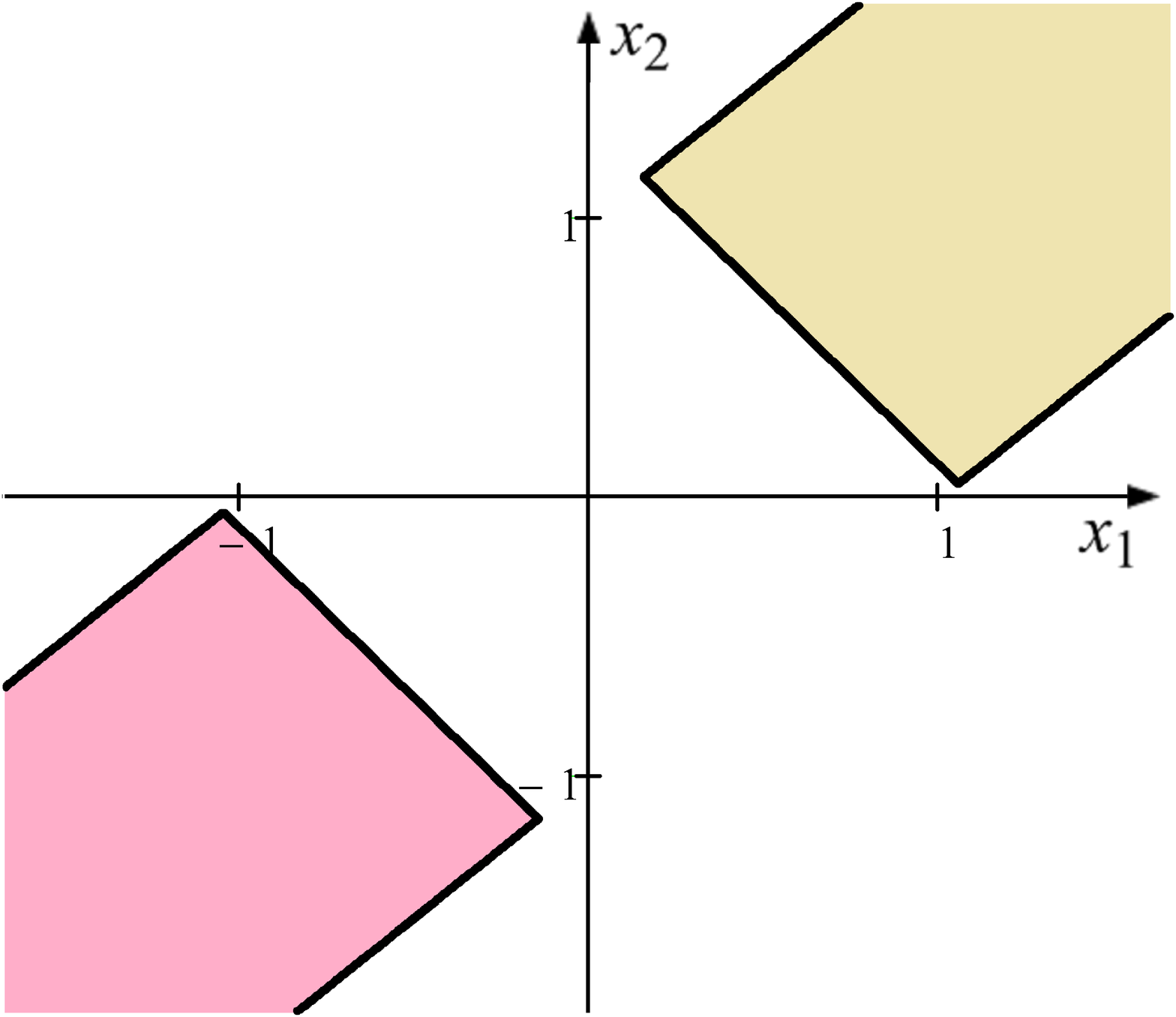}
\\
${A_c} = \left( {\begin{smallmatrix}
  {1.1}&{ - 0.1} \\ 
  {0.1}&{ - 1.1} \\ 
  0&0 
\end{smallmatrix}} \right)$,
${b_c} = \left( {\begin{smallmatrix}
  0 \\ 
  0 \\ 
  {1.2} 
\end{smallmatrix}} \right)$,
&
${A_c} = \left( {\begin{smallmatrix}
  1&{ - 1} \\ 
  {1.1}&{ - 1.1} \\ 
  0&0 
\end{smallmatrix}} \right)$,
${b_c} = \left( {\begin{smallmatrix}
  {0} \\ 
  {0} \\ 
  {1.2}
\end{smallmatrix}} \right)$,
\\
${A_r} = \left( {\begin{smallmatrix}
  0&0 \\ 
  0&0 \\ 
  {1.1}&{0.9} 
\end{smallmatrix}} \right)$,
${b_r} = \left( {\begin{smallmatrix}
  {1.1} \\ 
  {1.1} \\ 
  {0} 
\end{smallmatrix}} \right)$.
&
${A_r} = \left( {\begin{smallmatrix}
  0&0 \\ 
  0&0 \\ 
  {1.1}&{0.9} 
\end{smallmatrix}} \right)$,
${b_r} = \left( {\begin{smallmatrix}
  {1.1} \\ 
  {1.1} \\ 
  {0} 
\end{smallmatrix}} \right)$.
\\
 &
\\
{a) $\mathbf{X}$ is a disconnected bounded domain} &
{b) $\mathbf{X}$ is disconnected unbounded domain}
\end{tabular} 
\caption{Examples of ISLAEs with disconnected (non-convex) joined sets of solutions}
\end{figure}

The NP-hardness of finding ISLAE solutions in the general case is a limiting factor for the introduction of this toolkit into the practice of modeling and data analysis.
At the same time, as often shown by the study of redefined ISLAEs related to the solution of practical (engineering) problems of constructing linear dependencies from experimental data (with interval uncertainty), the admissible set of ISLAE 1) is a convex polyhedron that lies entirely in some orthant \mbox{$n$-dimensional} space and 2) as the number of experiments increases, it contracts to a point coinciding with the true vector of coefficients of the linear model.

Property 2) is analogous to \emph{consistency} (see, for example, \cite{Ibragimov,Seber}) of a statistical model.
Property 1) firstly, guarantees \emph{polynomial complexity} of finding \mbox{ISLAEs} solutions (using linear programming methods, see e.g. \cite{Schrijver}), and secondly, is an analog of the \emph{significance} coefficients of the statistical linear model \cite{Seber}.

\section{Spade-work}

Let us introduce the definitions necessary for subsequent calculations and justify auxiliary results.

Let us introduce the definitions necessary for subsequent calculations and justify auxiliary results.

Let $\hat x = (\hat x_j) = A_c^ + {b_c}$ be the normal least squares pseudo-solution (LS-solution) of the inconsistent overdetermined system of linear algebraic equations (SLAE) ${A_c}x \cong {b_c }$,
$\Delta b_c = {b_c} - {A_c}\hat x$ is its discrepancy with the minimum Euclidean norm,
$A_c^ + $ is the corresponding pseudo-inverse matrix, and the conditions
${A_c^1\hat x \leqslant b_c^1}$,
${-A_c^2}\hat x \leqslant {-b_c^2}$,
where
up to some permutation of strings $A_c$ and elements $b_c$
\[
{A_c} = \left[ {\begin{array}{*{20}{c}}
  {A_c^1} \\ 
  {A_c^2} 
\end{array}} \right],\;{b_c} = \left[ {\begin{array}{*{20}{c}}
  {b_c^1} \\ 
  {b_c^2} 
\end{array}} \right].
\]  

Let us introduce the notation:
\begin{gather*}
{{\tilde A}_c}  =  \left[ {\begin{array}{*{20}{c}}
  {A_c^1} \\ 
  { - A_c^2} 
\end{array}} \right],\;{{\tilde b}_c}  =  \left[ {\begin{array}{*{20}{c}}
  {b_c^1} \\ 
  { - b_c^2} 
\end{array}} \right],\;
S  =  \mathrm{diag} \left( {\mathrm{sign} \left( {\hat x} \right)} \right),\\
\overset{\lower0.5em\hbox{$\smash{\scriptscriptstyle\frown}$}}{x} =
(\tilde A_c - A_r S)^+ (\tilde b_c + b_r),\;
\Delta \overset{\lower0.5em\hbox{$\smash{\scriptscriptstyle\frown}$}}{b}
= (\tilde b_c + b_r) - (\tilde A_c - A_r S)\overset{\lower0.5em\hbox{$\smash{\scriptscriptstyle\frown}$}}{x}, \\
\tilde{\mathbf{X}} =
\left\{ {x\left| {({A_c} - {A_r}S)x \leqslant {b_c} + {b_r},
\;
( - {A_c} - {A_r}S)x \leqslant - {b_c} + {b_r}} \right.} \right\},
\\
{1} \text{ is a } n\text{-dimensional vector of ones}, \\
0_n \text{ is a zero matrix of order } n, \\
I_n \text{ is the identity matrix of order } n, \\
\sigma_{\min }^{{A_c}}
\text{ is the minimum singular value of the matrix } A_c\;, \\
\sigma _{\max }^{{A_r}}
\text{ is the maximum singular value of the matrix } A_r, \\
||\cdot|| \text{ this is the Euclidean vector or spectral matrix norm} \\
\text{ (depending on the context), }
\end{gather*}
\vskip -4mm
function
$\operatorname{sign}(\cdot)$ applied to vector argument
$\hat x$
element by element, returning an $n\text{-dimensional}$ vector composed of the numbers $\{-1,0,+1\}$
according to the signs of the elements $\hat x_j$.

\vskip 3mm

Note that
$1/\sigma _{\min }^{{A_c}}=||A_c^+||$,
$\sigma _{\max }^{{A_r}}=||A_r||$, and similar relations are also valid for other matrices encountered in the text and pseudoinverses to them (see, for example, \cite{Lawson,Horn}).

The following lemmas are valid. 
\begin{lemma}
\label{L_1}
Systems of linear inequalities
\begin{equation}
\label{eq8}
({{\tilde A}_c} - {A_r}S)x \leqslant {{\tilde b}_c} + {b_r}\;,\; Sx\geqslant 0
\Leftrightarrow
\left[ {\begin{array}{*{20}{c}}
  \tilde A_c - A_r S \\ 
  { - S} 
\end{array}} \right]x \leqslant \left[ {\begin{array}{*{20}{c}}
  \tilde b_c + b_r \\ 
  0 
\end{array}} \right]
\end{equation}
and
\begin{equation}
\label{IE1}
({\tilde A_c} - {A_r}S)x \leqslant {\tilde b_c} + {b_r}
\end{equation}
are consistent.
\end{lemma}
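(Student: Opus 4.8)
The plan is to exhibit a single explicit point lying in the solution set of each system and thereby establish consistency directly, rather than invoking any abstract feasibility criterion. The natural candidate is $\overset{\lower0.5em\hbox{$\smash{\scriptscriptstyle\frown}$}}{x} = (\tilde A_c - A_r S)^+(\tilde b_c + b_r)$, which has already been introduced in the spade-work. Since the right-hand equivalence in \eqref{eq8} is just a rewriting of the left-hand pair of systems as a single block inequality, it suffices to treat the left-hand formulation; and since \eqref{IE1} is obtained from \eqref{eq8} by dropping the constraint $Sx \geqslant 0$, any point feasible for \eqref{eq8} is a fortiori feasible for \eqref{IE1}. So the whole lemma reduces to showing that $\overset{\lower0.5em\hbox{$\smash{\scriptscriptstyle\frown}$}}{x}$ satisfies both $(\tilde A_c - A_r S)\overset{\lower0.5em\hbox{$\smash{\scriptscriptstyle\frown}$}}{x} \leqslant \tilde b_c + b_r$ and $S\overset{\lower0.5em\hbox{$\smash{\scriptscriptstyle\frown}$}}{x} \geqslant 0$.

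For the first inequality, I would use the defining property of the pseudoinverse: $(\tilde A_c - A_r S)\overset{\lower0.5em\hbox{$\smash{\scriptscriptstyle\frown}$}}{x}$ is the orthogonal projection of $\tilde b_c + b_r$ onto the column space of $\tilde A_c - A_r S$, so $(\tilde A_c - A_r S)\overset{\lower0.5em\hbox{$\smash{\scriptscriptstyle\frown}$}}{x} = (\tilde b_c + b_r) - \Delta\overset{\lower0.5em\hbox{$\smash{\scriptscriptstyle\frown}$}}{b}$ with $\Delta\overset{\lower0.5em\hbox{$\smash{\scriptscriptstyle\frown}$}}{b}$ orthogonal to that column space. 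The inequality $(\tilde A_c - A_r S)\overset{\lower0.5em\hbox{$\smash{\scriptscriptstyle\frown}$}}{x} \leqslant \tilde b_c + b_r$ is then equivalent to $\Delta\overset{\lower0.5em\hbox{$\smash{\scriptscriptstyle\frown}$}}{b} \geqslant 0$. Here I would invoke the nonnegativity of the radius data: $b_r \geqslant 0$ and $A_r \geqslant 0$ componentwise, and more to the point the structure of $\tilde b_c + b_r$ relative to $\tilde A_c - A_r S$ — recalling that $\tilde b_c$ and $\tilde A_c$ are built from $b_c, A_c$ by the same sign flips used to define the partition via $\hat x$, so that $\tilde A_c \hat x \leqslant \tilde b_c$ holds by construction (this is exactly the content of the conditions $A_c^1 \hat x \leqslant b_c^1$, $-A_c^2\hat x \leqslant -b_c^2$). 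One then checks that the residual of the perturbed normal equations inherits nonnegativity from $b_r$ and the sign pattern $S$.

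For the second inequality $S\overset{\lower0.5em\hbox{$\smash{\scriptscriptstyle\frown}$}}{x} \geqslant 0$, the idea is that $\overset{\lower0.5em\hbox{$\smash{\scriptscriptstyle\frown}$}}{x}$ is a perturbation of $\hat x$, and $S = \mathrm{diag}(\mathrm{sign}(\hat x))$ already satisfies $S\hat x = |\hat x| \geqslant 0$; subtracting $A_r S$ from $\tilde A_c$ and adding $b_r$ to $\tilde b_c$ pushes the least-squares solution in a direction that preserves (indeed reinforces) the sign pattern, because $A_r S$ and $b_r$ enter with signs aligned to $S$. I would make this precise by writing $\overset{\lower0.5em\hbox{$\smash{\scriptscriptstyle\frown}$}}{x} - \hat x$ in terms of the pseudoinverses and the radius data and bounding the sign-flipped components from below by zero.

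The main obstacle I anticipate is the second step: controlling the sign of $S\overset{\lower0.5em\hbox{$\smash{\scriptscriptstyle\frown}$}}{x}$ rigorously. Pseudoinverse perturbation formulas are delicate when the rank of $\tilde A_c - A_r S$ differs from that of $\tilde A_c$, and in the rank-deficient or near-deficient case the bound on $\overset{\lower0.5em\hbox{$\smash{\scriptscriptstyle\frown}$}}{x} - \hat x$ need not be small. I expect the proof either to impose (or to have implicitly available from the theorem's hypotheses, to be stated later) a condition like $\sigma_{\min}^{A_c} > \sigma_{\max}^{A_r}$ guaranteeing full column rank of $\tilde A_c - A_r S$ and a quantitative closeness of $\overset{\lower0.5em\hbox{$\smash{\scriptscriptstyle\frown}$}}{x}$ to $\hat x$, or else to argue purely combinatorially from the monotone structure of the linear program that defines $\tilde{\mathbf{X}}$. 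If a clean monotonicity argument is available, it sidesteps the perturbation analysis entirely; otherwise the singular-value separation is the crux.
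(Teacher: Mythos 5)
There is a genuine gap, and it comes from the choice of witness point. You propose to certify consistency by showing that $\overset{\lower0.5em\hbox{$\smash{\scriptscriptstyle\frown}$}}{x}=(\tilde A_c-A_rS)^+(\tilde b_c+b_r)$ is feasible, which forces you to prove (i) $\Delta\overset{\lower0.5em\hbox{$\smash{\scriptscriptstyle\frown}$}}{b}\geqslant 0$ and (ii) $S\overset{\lower0.5em\hbox{$\smash{\scriptscriptstyle\frown}$}}{x}\geqslant 0$. Step (i) does not follow from nonnegativity of the radius data: the least-squares residual is characterized by orthogonality to the column space of $\tilde A_c-A_rS$, and an orthogonality condition imposes no sign pattern on the residual. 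In fact the paper treats $\Delta\overset{\lower0.5em\hbox{$\smash{\scriptscriptstyle\frown}$}}{b}>0$ as an independent \emph{hypothesis} (condition \eqref{E3} of Theorem~\ref{TT2}) precisely because it cannot be derived; and step (ii) likewise needs the rank and singular-value separation conditions \eqref{E1}--\eqref{E2}, which are hypotheses of Theorem~\ref{TT2} but are not available in Lemma~\ref{L_1}, which is stated under the standing setup only. So your argument, even if completed, would prove a conditional version of the lemma, not the lemma as stated.

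The intended (and much shorter) argument uses $\hat x$ itself as the witness — you already list every ingredient but stop short of using them directly. By definition of $S=\mathrm{diag}(\mathrm{sign}(\hat x))$ one has $S\hat x=|\hat x|\geqslant 0$, and by construction of the partition $A_c^1\hat x\leqslant b_c^1$, $-A_c^2\hat x\leqslant -b_c^2$, i.e.\ $\tilde A_c\hat x\leqslant\tilde b_c$. Hence
\begin{equation*}
(\tilde A_c-A_rS)\hat x=\tilde A_c\hat x-A_r|\hat x|\leqslant\tilde b_c\leqslant\tilde b_c+b_r,
\end{equation*}
using $A_r\geqslant 0$, $|\hat x|\geqslant 0$ and $b_r\geqslant 0$. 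Thus $\hat x$ is feasible for \eqref{eq8}, hence a fortiori for \eqref{IE1}, with no perturbation analysis and no extra assumptions. Your reduction of the block form of \eqref{eq8} to the pair of systems, and of \eqref{IE1} to \eqref{eq8}, is fine; only the witness needs to change.
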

\begin{proof}
Taking into account the above definitions of objects 
$\hat x$, $\tilde A_c$, $\tilde b_c$, $S$
and given the conditions $A_r\geqslant 0$, $b_r\geqslant 0$,
it is easy to make sure that the vector 
$\hat x$
belongs to the set of valid solutions of \eqref{eq8} and \eqref{IE1} systems.
\end{proof}
\begin{lemma}
\label{L_3}
If the system of linear inequalities
\[
{Ax\leqslant b},
  {Sx \geqslant 0}, 
\]
where $A\in \mathbb{R}^{m\times n}$,
$b\in \mathbb{R}^{m}$,
$x\in\mathbb{R}^{mn}$,
is consistent, and
the condition is met
\begin{equation}
\label{eq14}
\forall x\left| {Ax \leqslant b,Sx \geqslant 0 \Rightarrow Sx \geqslant {{1}}\delta ,} \right.
\end{equation}
where 
$S$ is a diagonal matrix of order $n$ with elements $s_j = \pm 1$ on the diagonal,
$\delta>0$ is some scalar,
then
the relation is valid
\begin{equation}
\label{eq15}
\forall x\left| Ax \leqslant b \Rightarrow Sx \geqslant {{1}}\delta.\right.
\end{equation}
\end{lemma}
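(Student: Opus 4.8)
The plan is to argue by contraposition, showing that if \eqref{eq15} fails then the hypothesis \eqref{eq14} must fail as well. So suppose there exists a point $x^{0}$ with $Ax^{0}\leqslant b$ but $s_{k}x^{0}_{k}<\delta$ for some index $k$. The key idea is to ``repair'' this point by pushing it into the orthant $\{Sx\geqslant 0\}$ along a straight segment toward a known feasible point of the restricted system, and to locate on that segment a point that witnesses the failure of \eqref{eq14}. Concretely, let $\bar x$ be any solution of $Ax\leqslant b,\ Sx\geqslant 0$ (which exists by the consistency assumption), and consider the segment $x(t)=(1-t)x^{0}+t\bar x$ for $t\in[0,1]$. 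By convexity of the polyhedron $\{Ax\leqslant b\}$ we have $Ax(t)\leqslant b$ for every $t\in[0,1]$, and $Sx(1)=S\bar x\geqslant 0$.

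First I would track the sign pattern along this segment. For each coordinate $j$, the scalar function $t\mapsto s_{j}x_{j}(t)$ is affine; at $t=1$ it is $\geqslant 0$. If it is also $\geqslant 0$ at $t=0$ then it stays $\geqslant 0$ on all of $[0,1]$; otherwise it is negative at $t=0$ and non-negative at $t=1$, so it has a unique zero at some $t_{j}\in(0,1]$ and is non-negative for $t\geqslant t_{j}$. Let $t^{*}=\max_{j}t_{j}$ over the coordinates where a sign change occurs (and $t^{*}=0$ if there are none). Then the point $x^{*}=x(t^{*})$ satisfies $Ax^{*}\leqslant b$ and $Sx^{*}\geqslant 0$: it lies in the feasible set of the restricted system. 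Hence, by \eqref{eq14}, $Sx^{*}\geqslant \mathbf{1}\delta$, and in particular $s_{k}x^{*}_{k}\geqslant\delta$.

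Next I would engineer a genuine violation of \eqref{eq14} from this. The affine function $g(t):=s_{k}x_{k}(t)$ satisfies $g(0)=s_{k}x^{0}_{k}<\delta$ and $g(t^{*})\geqslant\delta$, so by the intermediate value theorem there is $\tau\in(0,t^{*}]$ with $g(\tau)<\delta$ arbitrarily close to — but, choosing $\tau$ slightly below the first crossing of the level $\delta$, strictly less than — $\delta$; I must also keep $\tau\geqslant t^{*}$ is impossible, so instead I pick $\tau\in[0,t^{*})$ with $g(\tau)<\delta$, and then I must check that $x(\tau)$ still lies in $\{Sx\geqslant 0\}$. This is the delicate point: $x(\tau)$ is automatically in $\{Ax\leqslant b\}$ by convexity, but it need not satisfy $Sx\geqslant 0$ unless $\tau\geqslant t^{*}$. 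The resolution is to argue one coordinate at a time: re-choose the ``anchor'' $\bar x$ so that the last coordinate to enter its half-line is precisely the $k$-th one, or equivalently reduce to the case where $t_{k}=t^{*}$; then for $\tau$ just below $t_{k}$ the $k$-th constraint $s_{k}x_{k}(\tau)<\delta$ holds while... — no, there coordinate $k$ is also negative.

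The main obstacle, and where the argument must be done carefully, is exactly this interplay between staying in the orthant and keeping the $k$-th coordinate below $\delta$: one cannot in general have both at a single point, because entering the orthant may force $s_{k}x_{k}\geqslant 0$ but \eqref{eq14} demands the stronger $s_{k}x_{k}\geqslant\delta$, and the ``gap'' between $0$ and $\delta$ on the $k$-th coordinate is precisely what lets us slide back. The clean way I would finish: take $\tau$ to be the infimum of $t\in[0,1]$ such that $x(t)\in\{Sx\geqslant 0\}$; at $t=\tau$ the point $x(\tau)$ is in the closed feasible set of the restricted system (both constraint sets being closed), so $Sx(\tau)\geqslant\mathbf{1}\delta$ by \eqref{eq14}; but for $t$ slightly below $\tau$ the point $x(t)$ violates $Sx\geqslant 0$ in some coordinate $j$, hence $s_{j}x_{j}(\tau)=0$ by continuity, contradicting $s_{j}x_{j}(\tau)\geqslant\delta>0$. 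Therefore the infimum is $\tau=0$, i.e. $x^{0}$ itself lies in $\{Sx\geqslant 0\}$ — and then \eqref{eq14} applied to $x^{0}$ gives $Sx^{0}\geqslant\mathbf{1}\delta$, contradicting $s_{k}x^{0}_{k}<\delta$. This contradiction establishes \eqref{eq15}. I would write the final version in this last form, since it is the shortest and sidesteps the per-coordinate bookkeeping entirely.
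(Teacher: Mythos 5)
Your final version of the argument is correct and is essentially the paper's own proof: both connect the alleged violating point to a feasible point of the restricted system by a segment, use convexity of $\{x \mid Ax\leqslant b\}$, and locate on that segment a point lying on the boundary of the orthant $\{x \mid Sx\geqslant 0\}$ at which \eqref{eq14} forces $Sx\geqslant\mathbf{1}\delta$, contradicting the vanishing coordinate. Your ``infimum of entry times'' formulation in fact handles the case of several negative coordinates slightly more cleanly than the paper's sketch (which assumes a single offending coordinate), so you should discard the exploratory middle portion and keep only that last paragraph.
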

\begin{proof}
Assume the opposite: let there be a vector $y\in \mathbb{R}^n$ such that
$Ay\leqslant b$,
$s_j y_j \leqslant 0$,
$s_k y_k \geqslant 0$,
where $j\in\{1,\ldots,n\}$ is some index,
$k=1,2,\ldots,j-1,j+1,\ldots,n$.
Moreover, let $z$ be a vector such that
$Az \leqslant b$, $Sz \geqslant {{1}}\delta$. Consider also
$x(\alpha ) = \alpha y + (1 - \alpha )z$.
Due to the convexity of the admissible domain of any system of linear inequalities
the vector $x(\alpha)$ belongs to the admissible region of the system $Ax\leqslant b$ for any $0\leqslant\alpha\leqslant 1$.
It is easy to show that the specified constraints are satisfied by the parameter $\tilde \alpha$ such that $x_j(\tilde \alpha)=0$.
In this case, the conditions $Ax(\tilde \alpha)\leqslant b$, $Sx(\tilde \alpha)\geqslant 0$ are satisfied.
Therefore, due to \eqref{eq14},
$Sx(\tilde \alpha)\geqslant{{1}}\delta$,
which contradicts the condition $x_j(\tilde \alpha)=0$.
\end{proof}
\begin{lemma}
\label{L_4}
If the system of inequalities $Ax\leqslant b$ is compatible and the condition \eqref{eq15} is satisfied,
then for any joint system of linear inequalities of the form
${Ax \leqslant b}$, ${Cx \leqslant d}$, where $C$ and $d$ are arbitrary matrix and vector with matching between themselves and vector $x$
dimensions, then the corollary
\begin{equation*}
\forall x\left| {Ax \leqslant b, Cx \leqslant d \Rightarrow Sx \geqslant {{1}}\delta} \right..
\end{equation*}
\end{lemma}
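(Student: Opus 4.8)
The statement of Lemma~\ref{L_4} is essentially a monotonicity/restriction principle: once we know that $Sx \geqslant \mathbf{1}\delta$ holds on the \emph{whole} admissible set of $Ax \leqslant b$, then it must continue to hold on any \emph{subset} of that set, and in particular on the admissible set of the augmented system $Ax \leqslant b$, $Cx \leqslant d$. So the plan is to reduce the claim to this trivial set-inclusion observation and then hand the verification back to hypothesis~\eqref{eq15}.

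\begin{proof}
Let $x \in \mathbb{R}^n$ be arbitrary and suppose that $Ax \leqslant b$ and $Cx \leqslant d$. In particular the first block of inequalities, $Ax \leqslant b$, is satisfied by this $x$. By hypothesis the system $Ax \leqslant b$ is compatible and condition~\eqref{eq15} holds, i.e.\ every vector satisfying $Ax \leqslant b$ also satisfies $Sx \geqslant \mathbf{1}\delta$. Applying this implication to our particular $x$ yields $Sx \geqslant \mathbf{1}\delta$. Since $x$ was an arbitrary solution of the joint system $Ax \leqslant b$, $Cx \leqslant d$, this is exactly the asserted corollary. (Note that the compatibility of the joint system, assumed in the statement, is only needed so that the corollary is non-vacuous; it plays no role in the derivation of the implication itself.)
\end{proof}

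The only thing worth being careful about is the quantifier bookkeeping: condition~\eqref{eq15} is a universally quantified implication over all $x$, so it may be instantiated at any point satisfying its premise $Ax \leqslant b$, and every point of the augmented feasible region is such a point. There is no real obstacle here — the lemma is a packaging statement that isolates the trivial direction ``a property inherited from a superset'' so that it can be cited cleanly later (presumably with $A$, $b$ taken to be $\tilde A_c - A_r S$, $\tilde b_c + b_r$ and $C$, $d$ supplying the extra orthant constraints, combining Lemmas~\ref{L_1}, \ref{L_3}, and~\ref{L_4} to transfer a sign-boundedness estimate from the consistent system~\eqref{IE1} to the refined description $\tilde{\mathbf{X}}$). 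If anything, the subtlety is purely expository: making sure the reader sees that no convexity or structural property of $C$, $d$ is used, only that they cut down the feasible set.
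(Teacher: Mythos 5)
Your proof is correct, and it takes a genuinely more elementary route than the paper. The paper disposes of Lemma~\ref{L_4} in one line by invoking the Minkowski--Farkas theorem on consequences \cite[Theorem 4.7]{Ashmanov}, i.e.\ the duality-type characterization of when a linear inequality is a consequence of a compatible system. You instead observe that the claim is a pure logical weakening of the antecedent: the feasible set of the joint system $Ax \leqslant b$, $Cx \leqslant d$ is contained in that of $Ax \leqslant b$, so the universally quantified implication \eqref{eq15} may simply be instantiated at any point of the smaller set. That argument is airtight and uses nothing about linearity, convexity, or the structure of $C$ and $d$ --- which, as you note, is the real content of the lemma as a packaging device. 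The Farkas-based route buys something only if one additionally wants $Sx \geqslant {1}\delta$ expressed as a nonnegative combination of the rows of the augmented system (which is how consequences are certified in the later steps of Theorem~\ref{TT2}); for the literal statement being proved, your set-inclusion argument is sufficient and arguably the more transparent justification. Your remark that compatibility of the joint system only prevents vacuity is also accurate.
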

\begin{proof}
The assertion of the lemma follows directly from the Minkowski-Farkash lemma on consequences \cite[Theorem 4.7]{Ashmanov}.
\end{proof}

\begin{theorem}[\cite{Lawson}]
\label{TL}
Let $x$ be a normal pseudo-solution of the 
\textquotedblleft
exact\textquotedblright
~least squares problem
$Ax\cong b$,
where
$A\in\mathbb{R}^{m\times n}$,
$b\in\mathbb{R}^m$,
$\operatorname{rank}A=n$,
$r = b - Ax$ is corresponding residual vector;
let be
$x + \Delta x$ is normal pseudosolution 
\textquotedblleft
perturbed\textquotedblright
~least squares problems
$(A + \Delta A)(x + \Delta x)\cong (b+\Delta b)$,
where
$\Delta A\in\mathbb{R}^{m\times n}$,
$\Delta b\in\mathbb{R}^m$,
and the conditions are met
$\operatorname{rank}A = n < m$,
$\sigma _{\min }^{A}>\sigma _{\max }^{\Delta A}$.

Then
\begin{gather*}
\operatorname{rank}(A +\Delta A)=n,\\
||\Delta x|| \leqslant
\frac{1}{\sigma_{\min }^{A} - \sigma_{\max }^{\Delta A}}
\left(
\sigma_{\max }^{\Delta A}
\left(
||\hat x||
+
\frac{||r||}{\sigma_{\min }^{A}}
\right)
+
||\Delta b||
\right).
\end{gather*}
\end{theorem}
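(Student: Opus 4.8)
The plan is to proceed in three stages: establish that $B := A + \Delta A$ still has full column rank and bound $\|B^+\|$; write $\Delta x$ in closed form from the two sets of normal equations; and estimate the resulting terms. Throughout, put $\hat x := x = A^+b$ to match the notation of the bound.

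For the first stage, since $\operatorname{rank}A = n$ we have $\sigma_{\min}^{A} = \sigma_n(A) > 0$, and Weyl's perturbation inequality for singular values \cite{Horn} gives $\sigma_n(B) \ge \sigma_n(A) - \|\Delta A\| = \sigma_{\min}^{A} - \sigma_{\max}^{\Delta A} > 0$ by hypothesis; hence $\operatorname{rank}B = n$, $B^+ = (B^TB)^{-1}B^T$, and $\|B^+\| = 1/\sigma_n(B) \le (\sigma_{\min}^{A} - \sigma_{\max}^{\Delta A})^{-1}$. I would also record the sharper bound $\sigma_n(B) \ge \sigma_{\min}^{A} - \|AA^+\Delta A\|$: splitting $\Delta A = AA^+\Delta A + (I - AA^+)\Delta A$ and using $A^T(I - AA^+) = 0$, one checks that $B^TB$ exceeds $(A + AA^+\Delta A)^T(A + AA^+\Delta A)$ by the positive semidefinite matrix $\big((I - AA^+)\Delta A\big)^T(I - AA^+)\Delta A$, and then applies Weyl again.

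For the second stage, subtracting the normal equations $A^TA\hat x = A^Tb$ and $B^TB(\hat x + \Delta x) = B^T(b + \Delta b)$, and using $r = b - A\hat x$ together with the key orthogonality $A^Tr = 0$ (i.e.\ $r \perp \operatorname{range}A$), one would obtain
\[
\Delta x = (B^TB)^{-1}\Delta A^T r + B^+(\Delta b - \Delta A\,\hat x)
\]
(the same expression also drops out of the standard pseudoinverse perturbation identity, which collapses to $B^+ - A^+ = -B^+\Delta A A^+ + B^+(I - AA^+)$ because $I - B^+B = 0$ when $B$ has full column rank). Taking norms, the second group gives $\|B^+(\Delta b - \Delta A\hat x)\| \le (\sigma_{\min}^{A} - \sigma_{\max}^{\Delta A})^{-1}(\|\Delta b\| + \sigma_{\max}^{\Delta A}\|\hat x\|)$, which already supplies two of the three terms of the bound. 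For the residual term I would use that $A^Tr = 0$ forces $\Delta A^Tr = \big((I - AA^+)\Delta A\big)^Tr$, so that only the component of $\Delta A$ orthogonal to $\operatorname{range}A$ acts on $r$ --- precisely the component absent from $A + AA^+\Delta A$ above; combining this with the sharper singular-value bound and pulling out the factor $(\sigma_{\min}^{A} - \sigma_{\max}^{\Delta A})^{-1}$ should give $\|(B^TB)^{-1}\Delta A^Tr\| \le \sigma_{\max}^{\Delta A}\|r\|/\big((\sigma_{\min}^{A} - \sigma_{\max}^{\Delta A})\,\sigma_{\min}^{A}\big)$, the third term.

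The hard part will be this last estimate. A crude norm bound yields only $\sigma_{\max}^{\Delta A}\|r\|/(\sigma_{\min}^{A} - \sigma_{\max}^{\Delta A})^2$, and recovering the sharper factor $\sigma_{\min}^{A}$ in the denominator genuinely needs the interplay sketched above: the tilt of $\operatorname{range}A$ (which is what makes $B^+r$ nonzero) and the shrinkage of $\sigma_n(B)$ down toward $\sigma_{\min}^{A} - \sigma_{\max}^{\Delta A}$ are driven by orthogonal parts of $\Delta A$ and so cannot be simultaneously extreme. I expect this bookkeeping --- and, if necessary, a reference to the detailed treatment in \cite{Lawson} --- to be the only subtle point; the remaining steps are routine computational linear algebra.
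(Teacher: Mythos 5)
The paper offers no proof of this statement: it is quoted as a known perturbation bound for full-rank least squares problems with a citation to \cite{Lawson}, so there is nothing internal to compare your argument against. Your skeleton is nonetheless the standard one, and two of its three pieces are sound: Weyl's inequality gives $\operatorname{rank}(A+\Delta A)=n$ and $\|B^{+}\|\leqslant(\sigma_{\min}^{A}-\sigma_{\max}^{\Delta A})^{-1}$ for $B=A+\Delta A$, and the identity $\Delta x=B^{+}r+B^{+}(\Delta b-\Delta A\hat x)$ (valid because $B^{+}B=I_n$, together with $A^{\top}r=0$) correctly yields the $\|\hat x\|$ and $\|\Delta b\|$ terms with the right constants.

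The gap is exactly where you place it, and the repair you sketch does not close it. Splitting $\Delta A=P_A\Delta A+P_A^{\perp}\Delta A$ (with $P_A=AA^{+}$) and arguing that the two components ``cannot be simultaneously extreme'' fails in the spectral norm: unlike the Frobenius case, one can have $\|P_A\Delta A\|=\|P_A^{\perp}\Delta A\|=\|\Delta A\|$ simultaneously (take $\Delta A=u_1v_1^{\top}+u_2v_2^{\top}$ with $u_1\in\operatorname{range}A$, $u_2\perp\operatorname{range}A$ and $v_1\perp v_2$), so your refined lower bound $\sigma_{\min}(B)\geqslant\sigma_{\min}^{A}-\|P_A\Delta A\|$ combined with $\Delta A^{\top}r=(P_A^{\perp}\Delta A)^{\top}r$ still delivers only the crude estimate $\|B^{+}r\|\leqslant\sigma_{\max}^{\Delta A}\|r\|/(\sigma_{\min}^{A}-\sigma_{\max}^{\Delta A})^{2}$. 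The classical route to the stated factor $1/\sigma_{\min}^{A}$ is different: since $P_A^{\perp}r=r$ and $B^{+}=B^{+}P_B$ with $P_B=BB^{+}$, one has $B^{+}r=B^{+}P_BP_A^{\perp}r$, and then one invokes the equal-rank identity $\|P_BP_A^{\perp}\|=\|P_B^{\perp}P_A\|$ for orthogonal projections onto subspaces of equal dimension; because $P_B^{\perp}P_A=-P_B^{\perp}\Delta A\,A^{+}$, this gives $\|P_BP_A^{\perp}\|\leqslant\sigma_{\max}^{\Delta A}/\sigma_{\min}^{A}$ and hence $\|B^{+}r\|\leqslant\sigma_{\max}^{\Delta A}\|r\|/\bigl((\sigma_{\min}^{A}-\sigma_{\max}^{\Delta A})\,\sigma_{\min}^{A}\bigr)$, which is the missing third term. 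That symmetry of principal angles between equal-dimensional subspaces (it fails when the dimensions differ) is the ingredient your argument lacks; without it, or an explicit appeal to the corresponding theorem in \cite{Lawson}, you have proved only the weaker bound.
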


\section{Main result}

\begin{theorem}
\label{TT2}
Let the conditions be fulfilled
\begin{gather}
\label{E1}
\operatorname{rank}A_c=n,\; \sigma _{\min }^{A_c}>\sigma _{\max }^{A_r}, \\
\label{eq4}
A_r S \hat x\leqslant b_r, \\
\label{E2}
\mathop {\min }\limits_{j = 1,\ldots,n} \left| \hat x_j \right|
>\!
\left(
\gamma =
\frac{1}{\sigma _{\min }^{A_c} - \sigma _{\max }^{A_r}}\left( 
\sigma _{\max }^{A_r}
\left(
||\hat x||
+
\frac{||\Delta b_c||}{\sigma _{\min }^{A_c}}
\right)
+
||b_r||
\right) 
\right)
\!> 0, \\
\label{E3}
\Delta \overset{\lower0.5em\hbox{$\smash{\scriptscriptstyle\frown}$}}{b} > 0, \\
\label{E4}
||\Delta \overset{\lower0.5em\hbox{$\smash{\scriptscriptstyle\frown}$}}{b}||^2\cdot
\mathop {\max }\limits_{i,j} {q_{ij}} < 1,
\end{gather}
where $q_{ij}$ is an element of the matrix
\begin{equation}
\label{E5}
Q = (q_{ij}) = 
\operatorname{diag}
(S\overset{\lower0.5em\hbox{$\smash{\scriptscriptstyle\frown}$}}{x})^{-1}
\cdot
(\tilde A_c - A_r S)^+
\cdot
\operatorname{diag}
(\Delta \overset{\lower0.5em\hbox{$\smash{\scriptscriptstyle\frown}$}}{b})^{-1}.
\end{equation}
Then 
\begin{enumerate}
\item
Admissible domains of systems of linear inequalities
\eqref{eq8} and \eqref{IE1}
are not empty bounded convex polyhedra.
\item
There is such a number $\delta>0$ that
the condition is true
\begin{equation}
\label{eq10}
\forall x\left|
(\tilde A_c - A_r S)x\leqslant \tilde b_c+b_r
\right.
\Rightarrow
Sx \geqslant {1}\delta .
\end{equation}  
\item All $2^n$ systems of linear inequalities 
\begin{equation}
\label{eq11}
({{\tilde A}_c} - {A_r}\tilde S)x \leqslant {{\tilde b}_c} + {b_r},
\end{equation}
where ${\tilde S}$ is a diagonal matrix of order $n$ with elements $\pm 1$ on the diagonal, are compatible.
In this case, the system of linear inequalities 
$Sx\geqslant{1}\delta$
is a consequence of any of them. 
\item
Set 
${\mathbf{X}}$ coincides with the set
$\tilde{\mathbf{X}}$
and, in the case of non-emptiness, is a convex bounded polyhedron lying strictly inside the orthant defined by 
the signs of the diagonal elements of the matrix $S$ or, equivalently, the signs of the elements of the vector $\hat x$ 
MNC solutions of SLAE ${A_c}x\cong{b_c}$.
\end{enumerate}
\end{theorem}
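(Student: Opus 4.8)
The plan is to prove the four assertions in order, using Theorem~\ref{TL} to localise the relevant least-squares vectors, Lemma~\ref{L_1} for non-emptiness, Lemma~\ref{L_3} to carry the strict-orthant property from system~\eqref{eq8} to systems~\eqref{IE1} and~\eqref{eq11}, and the Oettli--Prager equivalence~\eqref{eq3} only at the end. I begin with two preliminary observations. Flipping the signs of whole rows of a least-squares system changes neither its objective $\|A_c x-b_c\|^{2}$ nor its normal equations, so the problems $A_c x\cong b_c$ and $\tilde A_c x\cong\tilde b_c$ share the minimiser $\hat x$ and the residual norm $\|\Delta b_c\|$, and $\tilde A_c$ has the same singular values as $A_c$; in particular $\sigma_{\min}^{\tilde A_c}=\sigma_{\min}^{A_c}$. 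Also, \eqref{E2} forces $\hat x_j\neq 0$ for every $j$, so $S$ is orthogonal and $\|A_rS\|=\sigma_{\max}^{A_r}<\sigma_{\min}^{A_c}$ by \eqref{E1}. Applying Theorem~\ref{TL} to the ``exact'' problem $\tilde A_c x\cong\tilde b_c$ and the ``perturbed'' problem $(\tilde A_c-A_rS)x\cong(\tilde b_c+b_r)$ (perturbations $\Delta A=-A_rS$, $\Delta b=b_r$) gives $\operatorname{rank}M=n$ for $M:=\tilde A_c-A_rS$ and $\|\overset{\lower0.5em\hbox{$\smash{\scriptscriptstyle\frown}$}}{x}-\hat x\|\leqslant\gamma<\min_j|\hat x_j|$; hence $\overset{\lower0.5em\hbox{$\smash{\scriptscriptstyle\frown}$}}{x}$ has the same sign pattern as $\hat x$ and $S\overset{\lower0.5em\hbox{$\smash{\scriptscriptstyle\frown}$}}{x}\geqslant(\min_j|\hat x_j|-\gamma)\,{1}>0$. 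Finally, $M$ having full column rank, $\Delta\overset{\lower0.5em\hbox{$\smash{\scriptscriptstyle\frown}$}}{b}$ is a least-squares residual, so it lies in $\operatorname{range}(M)^{\perp}$ and $M^{+}\Delta\overset{\lower0.5em\hbox{$\smash{\scriptscriptstyle\frown}$}}{b}=0$, while $\Delta\overset{\lower0.5em\hbox{$\smash{\scriptscriptstyle\frown}$}}{b}>0$ by \eqref{E3}.

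For assertion~1, convexity is immediate and non-emptiness is Lemma~\ref{L_1}; the only real point is boundedness of the polyhedron $\{x:Mx\leqslant\tilde b_c+b_r\}$ of \eqref{IE1}, which follows by showing that its recession cone $\{v:Mv\leqslant 0\}$ is trivial. For such a $v$ we have $\langle Mv,\Delta\overset{\lower0.5em\hbox{$\smash{\scriptscriptstyle\frown}$}}{b}\rangle=0$ since $\Delta\overset{\lower0.5em\hbox{$\smash{\scriptscriptstyle\frown}$}}{b}\perp\operatorname{range}(M)$, yet $Mv\leqslant 0$ and $\Delta\overset{\lower0.5em\hbox{$\smash{\scriptscriptstyle\frown}$}}{b}>0$ force this inner product to be negative unless $Mv=0$; hence $Mv=0$ and, $M$ being injective, $v=0$. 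Thus \eqref{IE1} is a non-empty bounded convex polyhedron, and the feasible set of \eqref{eq8}, which is a subset of it and is non-empty by Lemma~\ref{L_1}, is likewise.

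For assertion~2 I would first establish the $\delta$-estimate on the \emph{augmented} system \eqref{eq8} and then lift it by Lemma~\ref{L_3}. If $Mx\leqslant\tilde b_c+b_r$ and $Sx\geqslant 0$, writing $Mx=M\overset{\lower0.5em\hbox{$\smash{\scriptscriptstyle\frown}$}}{x}+\Delta\overset{\lower0.5em\hbox{$\smash{\scriptscriptstyle\frown}$}}{b}-w$ with $w\geqslant 0$ and using $M^{+}M=I_n$, $M^{+}\Delta\overset{\lower0.5em\hbox{$\smash{\scriptscriptstyle\frown}$}}{b}=0$ gives $x=\overset{\lower0.5em\hbox{$\smash{\scriptscriptstyle\frown}$}}{x}-M^{+}w$; pairing $\Delta\overset{\lower0.5em\hbox{$\smash{\scriptscriptstyle\frown}$}}{b}-w=Mx-M\overset{\lower0.5em\hbox{$\smash{\scriptscriptstyle\frown}$}}{x}$ with $\Delta\overset{\lower0.5em\hbox{$\smash{\scriptscriptstyle\frown}$}}{b}$ and using $\operatorname{range}(M)\perp\Delta\overset{\lower0.5em\hbox{$\smash{\scriptscriptstyle\frown}$}}{b}$ gives $\langle w,\Delta\overset{\lower0.5em\hbox{$\smash{\scriptscriptstyle\frown}$}}{b}\rangle=\|\Delta\overset{\lower0.5em\hbox{$\smash{\scriptscriptstyle\frown}$}}{b}\|^{2}$. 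Rewriting $M^{+}=\operatorname{diag}(S\overset{\lower0.5em\hbox{$\smash{\scriptscriptstyle\frown}$}}{x})\,Q\,\operatorname{diag}(\Delta\overset{\lower0.5em\hbox{$\smash{\scriptscriptstyle\frown}$}}{b})$ with $Q$ from \eqref{E5}, one gets coordinatewise $(Sx)_j=(S\overset{\lower0.5em\hbox{$\smash{\scriptscriptstyle\frown}$}}{x})_j\bigl(1-s_j\sum_i q_{ji}(\Delta\overset{\lower0.5em\hbox{$\smash{\scriptscriptstyle\frown}$}}{b})_i w_i\bigr)$, and the subtracted term is bounded by $\|\Delta\overset{\lower0.5em\hbox{$\smash{\scriptscriptstyle\frown}$}}{b}\|^{2}\max_{i,j}q_{ij}$, which is $<1$ by \eqref{E4}; hence $(Sx)_j\geqslant\delta:=\bigl(\min_k(S\overset{\lower0.5em\hbox{$\smash{\scriptscriptstyle\frown}$}}{x})_k\bigr)\bigl(1-\|\Delta\overset{\lower0.5em\hbox{$\smash{\scriptscriptstyle\frown}$}}{b}\|^{2}\max_{i,j}q_{ij}\bigr)>0$, uniformly in such $x$. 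Since \eqref{eq8} is compatible (Lemma~\ref{L_1}), Lemma~\ref{L_3} then delivers \eqref{eq10}. Assertion~3 follows at once: for every sign matrix $\tilde S$ the vector $\hat x$ solves \eqref{eq11}, because $\tilde A_c\hat x\leqslant\tilde b_c$ and $|A_r\tilde S\hat x|\leqslant A_r|\hat x|\leqslant b_r$ by $A_r\geqslant 0$ and \eqref{eq4}; and if $x$ solves \eqref{eq11} with $Sx\geqslant 0$, then $|x|=Sx$, so $(S-\tilde S)x\geqslant 0$, hence $A_r\tilde Sx\leqslant A_rSx$ and $(\tilde A_c-A_rS)x\leqslant(\tilde A_c-A_r\tilde S)x\leqslant\tilde b_c+b_r$, so $Sx\geqslant{1}\delta$ by \eqref{eq10}. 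As $\{(\tilde A_c-A_r\tilde S)x\leqslant\tilde b_c+b_r,\ Sx\geqslant 0\}$ is compatible (again $\hat x$), Lemma~\ref{L_3} gives $Sx\geqslant{1}\delta$ for every solution of \eqref{eq11}.

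For assertion~4 I use the Oettli--Prager equivalence \eqref{eq3}. If $x\in\mathbf X=\mathbf{\overset{\lower0.5em\hbox{$\smash{\scriptscriptstyle\frown}$}}{X}}$, choose $\tilde S$ with $\tilde Sx\geqslant 0$ and keep, for each row of $A_c^{1}$ the upper and for each row of $A_c^{2}$ the lower inequality of \eqref{OP}; this yields exactly $(\tilde A_c-A_r\tilde S)x\leqslant\tilde b_c+b_r$, so $Sx\geqslant{1}\delta>0$ by assertion~3, and therefore $\mathbf X$ lies strictly inside the orthant fixed by the signs of the diagonal of $S$, i.e.\ of $\hat x$. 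On that orthant $|x|=Sx$, so the two inequality blocks defining $\tilde{\mathbf X}$ become \eqref{OP}, whence $\mathbf X\subseteq\tilde{\mathbf X}$; conversely, any $x\in\tilde{\mathbf X}$ satisfies \eqref{IE1} (take the $A_c^{1}$-rows of its first block and the $A_c^{2}$-rows of its second), hence $Sx\geqslant{1}\delta>0$ by assertion~2, so $|x|=Sx$ and $x\in\mathbf X$. Thus $\mathbf X=\tilde{\mathbf X}$, which is a convex polyhedron contained in the bounded set \eqref{IE1} and, when non-empty, lies strictly in that orthant. The step I expect to be the main obstacle is the uniform $\delta$-estimate for \eqref{eq8}: it is the only point where \eqref{E2}--\eqref{E4} are all used at once, and it rests on combining the residual identity $\langle w,\Delta\overset{\lower0.5em\hbox{$\smash{\scriptscriptstyle\frown}$}}{b}\rangle=\|\Delta\overset{\lower0.5em\hbox{$\smash{\scriptscriptstyle\frown}$}}{b}\|^{2}$ with the scaling built into $Q$; the remaining steps are routine combinations of Theorem~\ref{TL}, the Oettli--Prager equivalence, and Lemmas~\ref{L_1} and~\ref{L_3}.
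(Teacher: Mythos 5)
Your proof is correct in outline and reaches all four conclusions by the same global strategy as the paper (Theorem~\ref{TL} to pin down the sign pattern of $\overset{\lower0.5em\hbox{$\smash{\scriptscriptstyle\frown}$}}{x}$, Lemma~\ref{L_1} for non-emptiness, the sign-flip comparison chain with Lemma~\ref{L_3} for parts 2--3, and Oettli--Prager for part 4), but two local steps take a genuinely different route. For boundedness the paper constructs explicit nonnegative Farkas multiplier matrices $P,Q$ satisfying $P_1(\tilde A_c-A_rS)=-I_n$, $Q_1(\tilde A_c-A_rS)=I_n$, $(P_1+Q_1)(\tilde b_c+b_r)>0$ and invokes a boundedness criterion from \cite{Ashmanov}; your recession-cone argument (for $M:=\tilde A_c-A_rS$, the conditions $Mv\leqslant 0$, $M^\top\Delta \overset{\lower0.5em\hbox{$\smash{\scriptscriptstyle\frown}$}}{b}=0$ and $\Delta \overset{\lower0.5em\hbox{$\smash{\scriptscriptstyle\frown}$}}{b}>0$ force $Mv=0$, hence $v=0$) is shorter, uses only \eqref{E1} and \eqref{E3}, and sidesteps the need to verify that the free parameters $\alpha,\beta$ can actually be chosen to make $P,Q\geqslant 0$. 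For part~2 the paper again argues dually, building $G=-SM^{+}+\chi S\overset{\lower0.5em\hbox{$\smash{\scriptscriptstyle\frown}$}}{x}\,\Delta\overset{\lower0.5em\hbox{$\smash{\scriptscriptstyle\frown}$}}{b}^{\top}\geqslant 0$ with $GM=-S$ and $G(\tilde b_c+b_r)<0$ and citing Minkowski--Farkas; your primal decomposition $x=\overset{\lower0.5em\hbox{$\smash{\scriptscriptstyle\frown}$}}{x}-M^{+}w$ combined with $\langle w,\Delta\overset{\lower0.5em\hbox{$\smash{\scriptscriptstyle\frown}$}}{b}\rangle=\|\Delta\overset{\lower0.5em\hbox{$\smash{\scriptscriptstyle\frown}$}}{b}\|^{2}$ is the same computation viewed from the other side, and it even produces an explicit $\delta$ without needing the detour through Lemma~\ref{L_3} (your hypothesis $Sx\geqslant 0$ is never actually used there). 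Parts 3 and 4 match the paper.

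One caveat, at precisely the step you flagged as delicate, and which your argument shares with the paper's: the bound $s_j\sum_i q_{ji}(\Delta\overset{\lower0.5em\hbox{$\smash{\scriptscriptstyle\frown}$}}{b})_i w_i\leqslant\|\Delta\overset{\lower0.5em\hbox{$\smash{\scriptscriptstyle\frown}$}}{b}\|^{2}\max_{i,j}q_{ij}$ is valid only for coordinates with $s_j=+1$; for $s_j=-1$ the relevant quantity is $-\min_i q_{ji}$, which $\max_{i,j}q_{ij}$ does not control. The paper has the identical defect: a direct computation of its matrix $H$ gives $h_{ij}=-s_iq_{ij}+\chi$, not $-q_{ij}+\chi$. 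So either condition \eqref{E4} must be read with $SQ$ in place of $Q$ (or, as a safe sufficient version, with $\max_{i,j}|q_{ij}|$), or the statement implicitly assumes the positive orthant. This is a flaw in the hypothesis as stated rather than a gap in your proof relative to the paper's; it would be worth noting explicitly if you keep your version.
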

\begin{proof}
\begin{enumerate}
\item
By virtue of the lemma \ref{L_1}, the systems of linear inequalities \eqref{eq8}
and
\eqref{IE1}
are compatible (the corresponding valid domains \emph{are not empty}). 

By virtue of the condition \eqref{E2}, in the formulation of which
, by virtue of the theorem \ref{TL}
$\gamma$ is the upper bound 
$||\Delta x||=|| \hat x - \overset{\lower0.5em\hbox{$\smash{\scriptscriptstyle\frown}$}}{x} ||$ error of the LS-solution of the perturbed SLAE
$(\tilde A_c - A_r S)(\hat x + \Delta x) \cong \tilde b_c + b_r$,
the conditions are met 
$S\hat x>0$,
$S\overset{\lower0.5em\hbox{$\smash{\scriptscriptstyle\frown}$}}{x}>0$. By virtue of the last condition and assumption
\eqref{E3} the condition is true 
$
S\overset{\lower0.5em\hbox{$\smash{\scriptscriptstyle\frown}$}}{x} 
\overset{\lower0.5em\hbox{$\smash{\scriptscriptstyle\frown}$}}{\Delta b^\top} > 0
$
.
Let's build two $(n\times(m+n))$-matrices as follows:
\begin{gather*}
\label{P}
P = 
\left[ {\begin{array}{*{20}{c}}
  {{P_1}}&{{P_2}} 
\end{array}} \right]
=
\left[ 
{\begin{array}{*{20}{c}}
  {-(\tilde A_c - A_r S)^+ + \alpha S \overset{\lower0.5em\hbox{$\smash{\scriptscriptstyle\frown}$}}{x} }
\overset{\lower0.5em\hbox{$\smash{\scriptscriptstyle\frown}$}}{\Delta b^\top}
&
{0_n} 
\end{array}} 
\right]
,\\
\label{Q}
Q = 
\left[ {\begin{array}{*{20}{c}}
  {{Q_1}}&{{Q_2}} 
\end{array}} \right]
=
\left[ 
{\begin{array}{*{20}{c}}
  {(\tilde A_c - A_r S)^+ + \beta S \overset{\lower0.5em\hbox{$\smash{\scriptscriptstyle\frown}$}}{x} }
\overset{\lower0.5em\hbox{$\smash{\scriptscriptstyle\frown}$}}{\Delta b^\top}
&
{0_n} 
\end{array}} 
\right]
,
\end{gather*}
where $\alpha, \beta >0$ are some scalar parameters. Let's choose the values of the specified parameters so
that the conditions are met 
\begin{equation}
\label{D0}
P,Q\geqslant 0.
\end{equation} 

Since $S$ is an orthogonal matrix, due to the properties of the spectral matrix norm (see, for example, \cite{Horn})
, the conditions $||A_rS||=||A_r||=\sigma_{\max}^{A_r}$ are met.
Given this fact, as well as the conditions \eqref{E1}, we get $\operatorname{rank}(\tilde A_c - A_r S) = n$ (see, for example \cite[Theorem 9.12]{Lawson}), and therefore due to the known properties of pseudo-inverse matrices of full column rank and residuals of pseudo-solutions 
\cite{Lawson}
there are equality
\begin{equation}
\label{DI}
(\tilde A_c - A_r S)^+(\tilde A_c - A_r S)=I_n, \;
\overset{\lower0.5em\hbox{$\smash{\scriptscriptstyle\frown}$}}{\Delta b^\top}(\tilde A_c - A_r S)= 0. 
\end{equation}
 Therefore, the conditions are met 
\begin{equation}
\label{D1}
P\left[ {\begin{array}{*{20}{c}}
  \tilde A_c - A_r S \\ 
  { - S} 
\end{array}} \right] 
=  
P_1(\tilde A_c - A_r S)
=
- I_n,\;
Q\left[ {\begin{array}{*{20}{c}}
  \tilde A_c - A_r S \\ 
  { - S} 
\end{array}} \right] 
= 
Q_1(\tilde A_c - A_r S)
=
I_n.
\end{equation}
At the same time,
\begin{equation}
\label{D2}
(P + Q)\left[ {\begin{array}{*{20}{c}}
  \tilde b_c + b_r \\ 
  0 
\end{array}} \right] 
= 
(P_1+Q_1)(\tilde b_c + b_r)
=
\left( {\alpha  + \beta } \right)||\Delta \overset{\lower0.5em\hbox{$\smash{\scriptscriptstyle\frown}$}}{b} |{|^2}S\overset{\lower0.5em\hbox{$\smash{\scriptscriptstyle\frown}$}}{x}  > 0.
\end{equation}

Now it remains to note that the conditions \eqref{D0}--\eqref{D2} are necessary
and sufficient conditions for the \emph{boundedness} of non-empty admissible domains of systems of linear inequalities 
\eqref{eq8} and \eqref{IE1}
{\cite[Problem 4.117]{Ashmanov}}, which in this case turn out to be not just convex polyhedral sets, but convex bounded polyhedra \cite{Ashmanov}.
\item
Let us construct the $(n\times m)$-matrix $G$ by the formula
\begin{equation}
\label{D3}
G = -S(\tilde A_c - A_r S)^+
+
\chi
S\overset{\lower0.5em\hbox{$\smash{\scriptscriptstyle\frown}$}}{x} \overset{\lower0.5em\hbox{$\smash{\scriptscriptstyle\frown}$}}{\Delta b^\top}.
\end{equation}

Due to \eqref{E4}
the scalar parameter $\chi$ can be chosen in such a way that it satisfies the conditions
\begin{equation}
\label{D4}
\max \left\{ {\mathop {\max }\limits_{i,j} {q_{ij}},0} \right\} \leqslant \chi < \frac{1}{{||\Delta \overset{\lower0.5em\hbox{$\smash{\scriptscriptstyle\frown}$}}{b} ||^2}}.
\end{equation}

Let us show that the condition $G\geqslant 0$ is satisfied. By virtue of the assumption \eqref{E3} and the above condition
$
S\overset{\lower0.5em\hbox{$\smash{\scriptscriptstyle\frown}$}}{x} > 0$ matrix elements
$
H = (h_{ij}) =
\operatorname{diag}
(S\overset{\lower0.5em\hbox{$\smash{\scriptscriptstyle\frown}$}}{x})^{-1}
\cdot
G
\cdot
\operatorname{diag}
(\Delta \overset{\lower0.5em\hbox{$\smash{\scriptscriptstyle\frown}$}}{b})^{-1}
$
have the same signs as the elements of the matrix $G$. But due to \eqref{E5} and \eqref{D3}
$h_{ij} = -q_{ij} +\chi$,
whence \eqref{D4} $H,G\geqslant 0$.

Note now that due to \eqref{DI} and \eqref{D4}
\[
G(\tilde A_c - A_r S) = -S,\; G(\tilde b_c + b_r)=
S\overset{\lower0.5em\hbox{$\smash{\scriptscriptstyle\frown}$}}{x}
(-1+\chi
||
\Delta \overset{\lower0.5em\hbox{$\smash{\scriptscriptstyle\frown}$}}{b}
||^2)
<0
,
\]
whence, by virtue of the Minkowski-Farkash theorem on consequences
{\cite[Theorem 4.7]{Ashmanov}}
there is such a number $\delta>0$ that the condition \eqref{eq10} will be satisfied.
\item
Note that
if the condition is met
\eqref{eq4}
then the system of linear inequalities
\begin{equation}
\label{eq9}
({{\tilde A}_c} + {A_r}S)x \leqslant {{\tilde b}_c} + {b_r}\;,\; Sx\geqslant 0
\end{equation}
is consistent.
This is true because the vector
$\hat x$
belongs to the set of admissible solutions of the system \eqref{eq9}.
Now let's notice that
the system of linear inequalities \eqref{eq8} is consistent by Lemma \ref{L_1}.
Besides,
\begin{equation}
\label{eq16}
\forall x\left| {Sx \geqslant 0}\right.,\forall \tilde S \ne S \Rightarrow - {A_r}Sx \leqslant - {A_r}\tilde Sx \leqslant {A_r}Sx.
\end{equation}

Taking into account compatibility
systems of linear inequalities \eqref{eq8} and \eqref{eq9},
ratios
\eqref{eq16}, lemme \ref{L_3}, \ref{L_4}
and conditions \eqref{eq10},
the systems below are \emph{joint},
the chain of consequences is valid (in which each subsequent system of linear inequalities is a consequence of the previous one):
\begin{equation*}
\left\{\!\!\! {\begin{array}{*{20}{c}}
  {({{\!\tilde A}_c} \!+\! {A_r}S)x\! \leqslant \! {{\tilde b}_c} \!+\! {b_r}} \\
  {Sx \geqslant 0}
\end{array}} \right. \!\!\!\!\Rightarrow\!\!
\left\{\!\!\! {\begin{array}{*{20}{c}}
  {({{\!\tilde A}_c} \!-\! {A_r}\tilde S)x\! \leqslant \! {{\tilde b}_c} \!+\! {b_r}} \\
  {Sx \geqslant 0}
\end{array}} \right. \!\!\!\!\Rightarrow\!\!
\left\{\!\!\! {\begin{array}{*{20}{c}}
  {({{\!\tilde A}_c} \!-\! {A_r}S)x\! \leqslant \! {{\tilde b}_c} \!+\! {b_r}} \\
  {Sx \geqslant 0}
\end{array}} \right.
\!\!\!\!\Rightarrow\!\!
Sx \!\geqslant\! {1}\delta,
\end{equation*}
and, finally,
all systems of linear inequalities of the form \eqref{eq11} are consistent and the system
$Sx \geqslant {1}\delta$
is a consequence of any of them.
\item
Note that 
\begin{equation}
\label{eq17}
\forall x\Rightarrow \left| A_c x - b_c\right| = \left|\right.\!\!\tilde{A_c} x - \tilde{b}_c\!\left|\right.. 
\end{equation}
By virtue of \eqref{eq17}, the system of inequalities \eqref{OP} can be written as 
\[
\left| {{A_c}x - {b_c}} \right| \leqslant {A_r}\left| x \right| + {b_r} \Leftrightarrow 
\left\{ {\begin{array}{*{20}{c}}
  {{A_c}x - {b_c} \leqslant {A_r}\left| x \right| + {b_r}} \\ 
  {{b_c} - {A_c}x \leqslant {A_r}\left| x \right| + {b_r}} 
\end{array}} \right..
\]
In its turn,
\begin{gather*}
  {{A_c}x - {b_c} \leqslant {A_r}\left| x\right| + {b_r}}
\Leftrightarrow
({{\tilde A}_c} - {A_r}\tilde S_j)x \leqslant {{\tilde b}_c} + {b_r},\\
  {{A_c}x - {b_c} \leqslant {A_r}\left| x\right| + {b_r}}
\Leftrightarrow
({{-\tilde A}_c} - {A_r}\tilde S_j)x \leqslant -{{\tilde b}_c} + {b_r},\\
j=1,\ldots,2^n,
\end{gather*}
where ${\tilde S_j}$ is one of the $2^n$ diagonal matrices of order $n$ with entries $ \pm 1$ on the diagonal.
But by virtue of the lemma \ref{L_4}, the corollaries
\[
\forall x\left|
\left\{ \begin{array}{*{20}{c}}
  (\tilde A_c-A_r\tilde S_j)x \leqslant \tilde b_c+b_r \\
  (-\tilde A_c-A_r\tilde S_j)x \leqslant -\tilde b_c+b_r \\
  j = 1,\dots,2^n
\end{array}
\right.
\Rightarrow Sx \geqslant {{1}}\delta . \right.
\]
Combining the calculations above, we get
\begin{equation}
\label{eq18}
\forall x\left|
\phantom{|^|_|}
\left| {{A_c}x - {b_c}} \right| \leqslant {A_r}\left| x\right| + {b_r}
\Rightarrow Sx \geqslant {{1}}\delta . \right.
\end{equation}
In turn, due to \eqref{eq18} and \eqref{eq3},
\[
\left| {{A_c}x - {b_c}} \right| \leqslant {A_r}\left| x \right| + {b_r}\Leftrightarrow
\left\{
\begin{array}{*{20}{c}}
(A_c - A_r S)x \leqslant b_c + b_r \\ 
(-A_c - A_r S)x \leqslant -b_c + b_r
\end{array}
\right.
\Leftrightarrow
\tilde{\mathbf{X}}
\equiv 
{\mathbf{\overset{\lower0.5em\hbox{$\smash{\scriptscriptstyle\frown}$}}{X} }}
\equiv
{\mathbf{X}}.
\]
But, as was shown in part 1 of the proof, the admissible region of the system of inequalities
$(\tilde A_c - A_r S)x\leqslant (\tilde b_c + b_r)$ is not empty and is a convex bounded polyhedron. Consequently,
in view of the foregoing, if the admissible area of the studied ISAE is not empty, it is a convex bounded polyhedron,
lying strictly inside the orthant determined by
the signs of the diagonal elements of the matrix $S$,
or, equivalently, by the signs of the elements of the vector $\hat x$
LS-solutions of the SLAE ${A_c}x \cong {b_c}$.
\end{enumerate}
\end{proof}

\section{Estimation of the error of an arbitrary solution of SLAE and its convergence to the normal solution of a hypothetical exact SLAE}

Let $A_0 x = b_0$ be a hypothetical 
\textquotedblleft
exact\textquotedblright
~consistent SLAE, where
$A_0\in\mathbb{R}^{m\times n}$, $b_0\in\mathbb{R}^{m}$, $x\in\mathbb{R}^{n}$,
$m>n$, $\operatorname{rank}A_0=n$. This system has a unique solution $x_0=A_0^+ b_0$, which is also a \textit{normal} solution (see, for example, \cite{Lawson}). Matrix $A_0$ and vectors $x_0$, $b_0$ are unknown.
Let $\tilde Ax=\tilde b$ be an approximate (not necessarily consistent) SLAE,
and the conditions
$| A_0 - \tilde A |\leqslant \Delta A$,
$| b_0 - \tilde b |\leqslant \Delta b$,
where $\tilde A,\Delta A\in\mathbb{R}^{m\times n}$ are known matrices,
$\tilde b,\Delta b\in\mathbb{R}^{m}$ are known vectors, $\Delta A,\Delta b \geqslant 0$,
$\operatorname{rank}\tilde A=n$,
$\sigma_{\min }^{\tilde A}>\sigma_{\max }^{\Delta A}$.

Consider the ISLAE view
\begin{equation}
\label{cnvr1}
Ax=b,\tilde A-\Delta A\leqslant A\leqslant \tilde A+\Delta A, \tilde b-\Delta b\leqslant b\leqslant \tilde b+\Delta b.
\end{equation}

The following
\begin{theorem}
ISLAE \eqref{cnvr1} is consistent. Moreover, for any of its solutions $\{A,b,x\}$
the following conditions are satisfied: $\operatorname{rank}A=n$, $x$ is the only solution of the system $Ax=b$ that is simultaneously a normal solution,
\begin{equation}
\label{cnvr2}
\begin{gathered}
||x-x_0||\leqslant \frac{||A_0^+||}{1-2||\Delta A||\cdot||A_0^+||}
\left(2||\Delta A||\cdot||x_0||+2||\Delta b||
\right)\leqslant
\\
\frac{2\alpha}{1 - 2\alpha\sigma_{\max }^{\Delta A}}
\left(\alpha\sigma_{\max }^{\Delta A}(||\tilde b||+||\Delta b||)
+ ||\Delta b||\right),
\end{gathered}
\end{equation}
where
\begin{gather}
\label{cnvr3}
\alpha = \frac{1}{{\sigma_{\min }^{\tilde A}}} + \frac{{\sqrt 2 }}{{\sigma_{\min }^{\tilde A} - \sigma_{\max }^{\Delta A}}} \cdot \frac{{\sigma_{\max }^{\Delta A}}}{{\sigma_{\min }^{\tilde A}}}, \\
\label{cnvr4}
\mathop {\lim }\limits_{||\Delta A||,||\Delta b|| \to 0} x = {x_0}.
\end{gather}
\end{theorem}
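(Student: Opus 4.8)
The plan is to reduce the entire statement to Theorem~\ref{TL} applied with the ``exact'' least-squares problem taken to be $A_0x=b_0$ (consistent, zero residual) and the ``perturbed'' one $Ax=b$, supported by two elementary facts used repeatedly: for any matrix $M$ and any nonnegative matrix $N$ with $|M|\leqslant N$ one has $\|M\|\leqslant\|\,|M|\,\|\leqslant\|N\|$ (monotonicity of the spectral norm on nonnegative matrices), and Weyl's inequality $\sigma_{\min}(M+E)\geqslant\sigma_{\min}(M)-\|E\|$ for singular values.

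First I would settle consistency and the structural claims. Since $|A_0-\tilde A|\leqslant\Delta A$, $|b_0-\tilde b|\leqslant\Delta b$ and $A_0x_0=b_0$, the triple $\{A_0,b_0,x_0\}$ is admissible for \eqref{cnvr1}, so the ISLAE is consistent and $x_0$ is one of its solutions. For an arbitrary solution $\{A,b,x\}$ the bound $|A-\tilde A|\leqslant\Delta A$ gives $\|A-\tilde A\|\leqslant\|\Delta A\|=\sigma_{\max}^{\Delta A}<\sigma_{\min}^{\tilde A}$, hence $\sigma_{\min}(A)\geqslant\sigma_{\min}^{\tilde A}-\sigma_{\max}^{\Delta A}>0$ and $\operatorname{rank}A=n$; full column rank together with the consistency of $Ax=b$ then makes $x$ the unique solution of $Ax=b$, which coincides with the normal solution $A^{+}b$.

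Next comes the estimate \eqref{cnvr2}. Writing $A=A_0+E$, $b=b_0+f$ and passing through $\tilde A,\tilde b$, the triangle inequality yields $|E|\leqslant 2\Delta A$, $|f|\leqslant 2\Delta b$, so $\|E\|\leqslant 2\sigma_{\max}^{\Delta A}=2\|\Delta A\|$ and $\|f\|\leqslant 2\|\Delta b\|$. As the exact problem has zero residual, Theorem~\ref{TL} gives $\|x-x_0\|\leqslant(\sigma_{\min}^{A_0}-\|E\|)^{-1}\bigl(\|E\|\,\|x_0\|+\|f\|\bigr)$; inserting these two bounds and $1/\sigma_{\min}^{A_0}=\|A_0^{+}\|$, and using that the right-hand side is monotone in $\|E\|$ and $\|f\|$, produces the first inequality of \eqref{cnvr2}. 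For the computable second inequality I would bound $\|A_0^{+}\|$: from $A_0=\tilde A+(A_0-\tilde A)$ with $\|A_0-\tilde A\|\leqslant\sigma_{\max}^{\Delta A}$, the full-column-rank pseudoinverse perturbation inequality $\|A_0^{+}-\tilde A^{+}\|\leqslant\sqrt2\,\|\tilde A^{+}\|\,\|A_0^{+}\|\,\|A_0-\tilde A\|$ together with $\|A_0^{+}\|\leqslant(\sigma_{\min}^{\tilde A}-\sigma_{\max}^{\Delta A})^{-1}$ on the right-hand side and $\|\tilde A^{+}\|=1/\sigma_{\min}^{\tilde A}$ gives exactly $\|A_0^{+}\|\leqslant\alpha$ with $\alpha$ as in \eqref{cnvr3}; combined with $\|x_0\|=\|A_0^{+}b_0\|\leqslant\alpha(\|\tilde b\|+\|\Delta b\|)$ and the monotonicity of the first bound in $\|A_0^{+}\|$ and $\|x_0\|$, substitution yields the second inequality. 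Finally \eqref{cnvr4} follows because, as $\|\Delta A\|,\|\Delta b\|\to 0$, every admissible $\tilde A,\tilde b$ tends to $A_0,b_0$, so $\sigma_{\min}^{\tilde A}$ stays bounded away from $0$, $\alpha$ stays bounded, and the right-hand side of \eqref{cnvr2} tends to $0$ uniformly in the solution $x$.

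The hard part is not any single estimate but the bookkeeping: propagating the factor-of-two growth of the perturbations correctly through the componentwise inequalities, and --- the single genuinely delicate point --- estimating the right-hand side of the pseudoinverse perturbation inequality in precisely the way that turns the resulting bound on $\|A_0^{+}\|$ into the exact quantity $\alpha$ of \eqref{cnvr3}, while keeping in mind that \eqref{cnvr2} is meaningful, and the chain of substitutions legitimate, only under the mild smallness condition $2\alpha\sigma_{\max}^{\Delta A}<1$ (equivalently, positivity of the denominators), which is automatic in the asymptotic regime of \eqref{cnvr4}.
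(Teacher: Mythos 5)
Your proposal is correct and follows essentially the same route as the paper: exhibit $\{A_0,b_0,x_0\}$ as an admissible triple, apply Theorem~\ref{TL} with the zero-residual exact problem $A_0x=b_0$ and doubled perturbations $2\Delta A$, $2\Delta b$, and then eliminate the unknown quantities $\|A_0^{+}\|$ and $\|x_0\|$ via the pseudoinverse perturbation bound to arrive at $\alpha$. Your explicit remarks on the norm monotonicity for componentwise-bounded matrices and on the smallness condition $2\alpha\sigma_{\max}^{\Delta A}<1$ only make explicit what the paper leaves tacit.
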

\begin{proof}
It is easy to verify that $\{A_0,b_0,x_0\}$ is a solution
\mbox{ISLAE} \eqref{cnvr1}.
The condition $\operatorname{rank}A =n$ follows from the conditions
$\operatorname{rank}\tilde A=n$,
$\sigma _{\min }^{\tilde A}>\sigma _{\max }^{\Delta A}$
by Theorem~\ref{TL}.
The uniqueness and normality of the solution $x$ follows from the condition
$\operatorname{rank}\tilde A=n$ \cite{Lawson,Horn}.
Since the system $Ax=b$ is compatible, $r=b-Ax=0$.
Due to the fulfillment of the conditions
$|\tilde A - A|\leqslant \Delta A$,
$|\tilde b - b|\leqslant \Delta b$
for any $\{A,b,x\}$ that are an ISLAE solution \eqref{cnvr1},
conditions are met
$|A_0 - A|\leqslant 2\Delta A$,
$|b_0 - b|\leqslant 2\Delta b$.
Therefore, by Theorem~\ref{TL},
\begin{equation}
\label{cnvr5}
||x-x_0||\leqslant \frac{||A_0^+||}{1-2||\Delta A||\cdot||A_0^+||} 
\left(2||\Delta A||\cdot||x_0||+2||\Delta b||
\right),
\end{equation}
whence the relation \eqref{cnvr4} immediately follows. It remains to get the top scores
unknown quantities $||A_0^+||$ and $||x_0||$. According to \cite{Lawson}
estimate $||\tilde A^+ - A_0^+||$ for the case
$\operatorname{rank}\tilde A = \operatorname{rank}A_0^+ = n$ has the form
$$
||\tilde A^+ - A_0^+||
\leqslant
\sqrt 2 \frac{{||\Delta A|| \cdot ||{{\tilde A}^ + }|{|^2}}}{{1 - ||\Delta A|| \cdot ||{{\tilde A}^ + }||}}
=
\frac{\sqrt 2 }{\sigma_{\min }^{\tilde A} - \sigma_{\max }^{\Delta A}} \cdot \frac{\sigma_{ \max }^{\Delta A}}{{\sigma_{\min }^{\tilde A}}},
$$
whence, by virtue of the obvious relation $||A_0^+||\leqslant ||\tilde A^+|| + ||\tilde A^+ - A_0^+||$, we have
$
||A_0^+||\leqslant \alpha,
$
where $\alpha$ is given by \eqref{cnvr3}.
Similarly, $||b_0||\leqslant||\tilde b||+||\Delta b||$.

Now note that the condition $x_0=A_0^+ b_0$ implies
$$
||x_0||\leqslant ||A_0^+||\cdot (||\tilde b||+||\Delta b||) \leqslant \alpha(||\tilde b||+||\Delta b||).
$$

Substituting into \eqref{cnvr5} the found upper bounds for the quantities $||A_0^+||$, $||x_0||$, $||b_0||$, we obtain the final inequality in \eqref{cnvr2}.
\end{proof}

\section{Numerical example}

As a numerical example, consider the inverse problem of chemical kinetics for an irreversible first-order reaction, which consists in determining two unknown parameters from experimental data: $c_0$ (the initial concentration of a substance) and $k$ (the reaction rate constant) in a kinetic model of the form
\begin{equation}
\label{K1}
c(t) = c_0 \exp (-kt),
\end{equation}
where $c(t)$ is the concentration of the substance at time $t$. The experimental data to be processed are taken from
\cite{Emanuel} and relate to the irreversible reaction of the decomposition of hexaphenylethane molecules into two molecules of the free radical triphenylmethyl:
\[
\mathrm{(C_6H_5)_3C - C(C_6H_5)_3 \to 2(C_6H_5)_3C},
\]
flowing at $0$ ${}^ \circ {\text{C}}$ in a mixture of 95 $\%$ toluene and 5 $\%$ aniline. The corresponding numerical values are presented in Table 1.
\begin{table}[h!]
\label{Tab1}
\caption{Experimental kinetics of hexaphenylethane decomposition}
\begin{tabular}{|p{0.8in}|p{0.4in}|p{0.4in}|p{0.4in}|p{0.4in}|p{0.4in}|p{0.4in}|p {0.4in}|p{0.4in}|p{0.4in}|} \hline
{\footnotesize $t^\text{ test}$, min}
 & 0 & 0.50 & 1.05 & 2.20 & 3.65 & 5.5 & 7.85 & 9.45 & 14.75 \\ \hline
{\footnotesize $c^\text{ test}(t)$, mol/l} & 0.1000 & 0.0934 & 0.0867 & 0.0733 & 0.0600 & 0.0465 & 0.0334 & 0.0265 & 0.0134 \\ \hline
\end{tabular}
\end{table}
Following the logic of \cite{Belov}, we will assume that the experimental data under study have an interval uncertainty
the following form:
\begin{gather*}
t_1 = 0,\; t_i = t_i^\text{ test}\pm \varepsilon_t,\; i=2,3,\ldots,9,\; \varepsilon_t=0.005, \\
c(t_i) = c^\text{ test}(t_i)\pm \varepsilon_c,\; i=1,2,\ldots,9,\; \varepsilon_c = 0.0005.
\end{gather*}

Transition from \eqref{K1} to a linearized model
$
\ln ({c(t)}) = \ln ({c_0}) - kt
$
allows you to form ISLAE with 9 interval equations and 2 unknowns,
coefficient matrices $A_c$, $A_r$ and vectors of the right side $b_c$, $b_r$ of which
have the following form
\[{A_c} = \left( {\begin{array}{*{20}{c}}
  1&{{t_1^\text{ test}}} \\
   \vdots & \vdots \\
  1&{{t_9^\text{ test}}}
\end{array}} \right),\;{A_r} = \left( {\begin{array}{*{20}{c}}
  0&0 \\
  0&{{\varepsilon _t}} \\
   \vdots & \vdots \\
  0&{{\varepsilon_t}}
\end{array}} \right),\;{b_c} = \left( {\begin{array}{*{20}{c}}
  {{\xi _1}} \\
   \vdots \\
  {{\xi _9}}
\end{array}} \right),\;{b_r} = \left( {\begin{array}{*{20}{c}}
  {{\zeta _1}}\\
   \vdots \\
  {{\zeta _9}}
\end{array}} \right),
\]
where
\[
{\xi _i} = 
\frac{{\ln\! \left( {c({t_i^\text{ test}}) - {\varepsilon _c}} \right) + 
\ln\! \left( {c({t_i^\text{ test}}) + 
{\varepsilon _c}} \right)}}{2},\;
{\zeta _i} = 
\frac{{\ln\! \left( {c({t_i^\text{ test}}) + 
{\varepsilon _c}} \right) - 
\ln\! \left( {c({t_i^\text{ test}}) - {\varepsilon _c}} \right)}}{2}.
\]

Calculations performed in the Mathcad\textsuperscript{\textregistered} environment give the following results:
\begin{gather*}
\hat x
\approx
\left( {\begin{array}{*{20}{r}}
   - 2.3088695 \\
   - 0.1374258
\end{array}} \right), \;
\overset{\lower0.5em\hbox{$\smash{\scriptscriptstyle\frown}$}}{x} 
\approx
\left( {\begin{array}{*{20}{r}}
-2.3146126\\
-0.1364464
\end{array}} \right), \;
S =
\left( {\begin{array}{*{20}{r}}
  { - 1}&0 \\ 
  0&{ - 1} 
\end{array}} \right), 
\\
\tilde A_c =\!
\left( {\begin{array}{*{20}{r}}
  1     &{   0.00} \\ 
  1     &{   0.50} \\ 
  1     &{   1.05} \\ 
  { - 1}&{ - 2.20} \\ 
  { - 1}&{ - 3.65} \\ 
  { - 1}&{ - 5.50} \\ 
  { - 1}&{ - 7.85} \\ 
  { - 1}&{ - 9.45} \\ 
  1     &{  14.75}
\end{array}} \right),  
\tilde b_c 
\approx 
\left( {\begin{array}{*{20}{r}}
  -2.302598 \\
  -2.370878 \\
  -2.445318 \\
   2.613218 \\
   2.813445 \\
   3.068360 \\
   3.399311 \\
   3.630789 \\
  -4.313197   
\end{array}} \right),
b_r
\approx 
\left( {\begin{array}{*{20}{r}}
0.005000\\
0.005353\\
0.005767\\
0.006821\\
0.008334\\
0.010753\\
0.014971\\
0.018870\\
0.037331
\end{array}} \right), 
\end{gather*}
\begin{gather*}
\Delta \overset{\lower0.5em\hbox{$\smash{\scriptscriptstyle\frown}$}}{b} 
\approx 
\left( {\begin{array}{*{20}{r}}
0.017015\\
0.017993\\
0.019013\\
0.005927\\
0.009819\\
0.014728\\
0.029248\\
0.046310\\
0.052012
\end{array}} \right),
A_r S\hat x 
\approx
\left( {\begin{array}{*{20}{l}}
0.000000\\
0.000687\\
0.000687\\
0.000687\\
0.000687\\
0.000687\\
0.000687\\
0.000687\\
0.000687\\
\end{array}} \right),\\
\sigma _{\min }^{{A_c}} 
\approx 
2.030051 >
\sigma _{\max }^{{A_r}}
\approx
0.014142, 
\operatorname{rank}A_c = \operatorname{rank}(\tilde A_c - A_r S) = 2,  \\
\gamma
\approx
0.040104,\;
||\Delta \overset{\lower0.5em\hbox{$\smash{\scriptscriptstyle\frown}$}}{b}||^2\cdot
\mathop {\max }\limits_{i,j} {q_{ij}} 
\approx
0.093881.
\end{gather*}

The presented numerical values testify to the fulfillment of the conditions \eqref{E1}--\eqref{E4} of the theorem \ref{TT2}.
The validity of the main statements of the theorem (the type and relative position of the admissible regions of the corresponding systems of inequalities) is shown graphically in Figure 1.

\begin{figure}[h!]
\center{\includegraphics[width=0.8\textwidth]{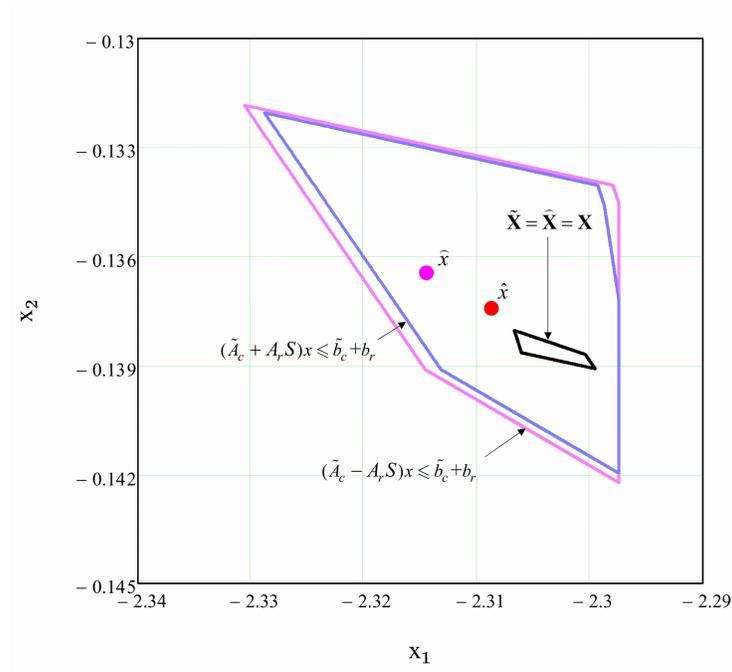}}
\caption{Illustration of fulfillment of the conditions of Theorem 2:
boundaries of the corresponding admissible domains, solutions of the corresponding LS-problems}
\label{F1}
\end{figure}

\section{Conclusion}

An attempt is made to bring together the theory and methods of interval systems of linear algebraic
equations with engineering practice of building linear models from experimental data with interval uncertainty. The results obtained (in the form of corresponding sufficient conditions) do not contradict
an intuitive requirement for initial data, which can be informally formulated as a requirement
the relative "smallness" of interval errors compared to the coefficients of the matrix $A_c$ and the vector $b_c$ of the "central" SLAE in combination with the requirement that the condition number of the matrix $A_c$ is not "not too high".

Some important questions are beyond the scope of this work. For example, discussion of numerical algorithms for finding least squares solutions and their residuals, determining the rank of matrices, calculating singular values of matrices.
This issue can be the subject of a separate further study, and at the same time, an extensive literature is devoted to it.
In the context of this article, we only note that the construction of LSM solutions and the corresponding residuals can be carried out by efficient, polynomial in complexity, finite-step or iterative methods,
and singular valuescan be calculated using efficient iterative algorithms with polynomial complexity. An overview of the corresponding algorithms with an estimate of their complexity can be found, for example, in the monograph
\cite{Golub}.

The same can be said about the problem of choosing an efficient numerical method for finding solutions to a system of linear inequalities, to which the problem of finding a solution to ISLAE has been reduced. Numerical methods of linear programming
continue to develop intensively, so the question raised may be the subject of a separate further study.

As another direction of further research, apparently, one can point to the search for sufficient conditions
"significance" of the coefficients of interval linear models based not on the least squares solution of the "central" SLAE,
and its pseudosolutions in other norms (${\ell _1}$, $\ell _\infty$ ).

%
%
%
%

\end{document}